\newtheorem{definition}{Definition}[section]
\newtheorem{satz}{Theorem}
\newtheorem{lemma}[definition]{Lemma}
\theoremstyle{definition}
\newtheorem*{rems}{Remarks}
\renewcommand{\phi}{\varphi}
\DeclareMathOperator{\vol}{vol}
\newcommand{\skalar}[2]{\left\langle #1, #2 \right\rangle}
\newcommand{\norm}[1]{\left\Vert #1 \right\Vert}
\newcommand{\I}{\mathrm{i}}
\newcommand{\D}{\mathrm{d}}
\newcommand{\R}{\mathbb R}
\begin{document}
\title{Hyperplane Sections of Cylinders}
\author{Hauke Dirksen}
\address{Department of Mathematics, Kiel University}
\email{hauke.dirksen(at)gmx.de}
\keywords{cylinder, maximal, extremal, section, volume, Bessel function, Ball's integral inequality}
\subjclass[2010]{Primary 52A40; Secondary 52A20, 52A38, 33C10}
\date{February 25, 2016}
\begin{abstract}
We provide a formula to compute the volume of the intersection of a generalized cylinder with a hyperplane. Then we prove an integral inequality involving Bessel functions similar to Keith Ball's well-known inequality. Using this inequality we obtain upper bounds for the section volume. For large radius of the cylinder we determine the maximal section.
\end{abstract}
\maketitle
\section{Introduction}
The study of sections of certain convex bodies has a long history. The first formula for sections of the cube with a hyperplane dates back to Laplace 1812. The first results on bounds for the volume were found by D. Hensley \cite{Hensley1979} and K. Ball\cite{Ball1986}. The upper bound for the cube leads to a simple counterexample to the Busemann-Petty-Problem. So the study of hyperplane sections is linked with other problems in convex geometry. 
Many different convex bodies have been investigated. For example, $\ell_p$-balls in \cite{Meyer1988} and \cite{Koldobsky2005}, complex cubes in \cite{Oleszkiewicz2000}; also non-central sections in \cite{Moody2013} as well as taking other than Lebesgue measures in \cite{Koenig2013d} have been investigated. 
In this paper we deal with generalized cylinders.

Throughout this paper we use the following \emph{notations}: The Euclidean norm is denoted by $\norm{x}$, the standard scalar product by $\skalar{x}{y}$. 
For $a \in \R^{n}$ with $\norm{a}=1$ and $t \in \R$, let $H_a^t:= \{x \in \R^{n} \mid \skalar{a}{x}=t\} = H_a + t\cdot a$ be a translated hyperplane, especially $H_a:=H_a^0$. 
If $H$ is a $k$-dimensional (affine) subspace and  $A\subset H$, the $k$-volume of $A$ is the standard induced Lebesgue volume of the subspace, denoted by $\vol_k(A)$.
The characteristic function of a set $A$ is denoted by $\chi_A$.

The \emph{normalized} Bessel function of order $\nu$ is given by 
\[
j_{\nu}(s):=2^{\nu}\Gamma(\nu+1) \frac{J_{\nu}(s)}{s^{\nu}} \ \text{ for } s>0 \text{ and } j_{\nu}(0):=1,
\] 
where $J_{\nu}$ is the Bessel function of order $\nu$. The normalized Bessel function $j_{\nu}$ is continuous in $0$. A classical introduction to Bessel functions is \cite{Watson1966}. 

We consider \emph{generalized cylinders}.  Let 
\[
Z:= \frac 1 2 B_{\infty}^n \times r B_2^m \subset \R^{n+m}
\]
for $r>0$,  $n,m\in\mathbb{N}$, where $B_{\infty}^n:=\left[-1,1\right]^n$ and $B_2^m:=\{x\in \R^m \mid \norm{x}_2\leq 1\}$. 
We are interested in the volume of central sections, i.e. in the quantity
\[
\vol_{n+m-1} (H_a\cap Z)
\]
for $a \in \R^{n+m},\norm{a}=1$. We may assume $a=(a_1,\dots, a_n, a_{n+1},0,\dots,0)$, with $a_1, \dots, a_{n+1} \geq 0$, since $Z$ is rotationally symmetric with respect to the coordinates $n+1,\dots, n+m$ and symmetric with respect to the origin. 

Our first result, proved in Section \ref{sec:cylinder formula} by the classical Fourier analytic method, is a volume formula:
\begin{satz}\label{thm: cylinder formula}
For the cylinder $Z\subset \R^{n+m}$, with $m,n\in \mathbb{N}$, $r>0$, and a normal vector $a\in \R^{n+m}$ the volume of the hyperplane section $H_a\cap Z$ is given by
\begin{equation*}\label{eq:formula cylinder}
\vol_{n+m-1}(H_a\cap Z)=
r^m \frac{\pi^{\frac{m}{2}-1}}{\Gamma \left(\frac{m}{2}+1\right)}
					\int_{0}^{\infty} \prod_{j=1}^n \frac{\sin(\frac{a_j s}{2})}{\frac{a_j s}{2}} 	\cdot	
					j_{\frac{m}{2}} (a_{n+1}rs) \D s.
\end{equation*}
\end{satz}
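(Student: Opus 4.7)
The plan is to use the classical Fourier slice identity
\[
\vol_{n+m-1}(H_a\cap Z)=\frac{1}{2\pi}\int_{-\infty}^{\infty}\widehat{\chi_Z}(sa)\,\D s.
\]
To see this, one considers the one-dimensional marginal $t\mapsto\vol_{n+m-1}(H_a^t\cap Z)$ of the indicator $\chi_Z$ along the line $\R a$; its one-variable Fourier transform at $s\in\R$ coincides with $\widehat{\chi_Z}(sa)$, and Fourier inversion at $t=0$ yields the identity above.

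The next step is to exploit that $Z=\frac12 B_\infty^n\times rB_2^m$ is a Cartesian product, so that
\[
\widehat{\chi_Z}(\xi,\eta)=\widehat{\chi_{\frac12 B_\infty^n}}(\xi)\,\widehat{\chi_{rB_2^m}}(\eta),\qquad \xi\in\R^n,\ \eta\in\R^m,
\]
and to compute each factor in closed form. For the cube I would use the standard one-dimensional computation $\int_{-1/2}^{1/2}e^{-\I\xi_j x}\,\D x=\sin(\xi_j/2)/(\xi_j/2)$ and multiply out, obtaining $\prod_{j=1}^n \sin(\xi_j/2)/(\xi_j/2)$. For the Euclidean ball I would use the radiality of $\chi_{rB_2^m}$ and the Hankel representation of the Fourier transform of a radial function, reducing matters to an integral of the form $\int_0^1 u^{m/2}J_{m/2-1}(u|r\eta|)\,\D u$; the identity $\int_0^x t^\nu J_{\nu-1}(t)\,\D t=x^\nu J_\nu(x)$ then gives
\[
\widehat{\chi_{rB_2^m}}(\eta)=r^m\frac{\pi^{m/2}}{\Gamma(m/2+1)}\,j_{m/2}(r\norm{\eta}),
\]
which correctly reproduces $\vol_m(rB_2^m)$ at $\eta=0$. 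Plugging in $a=(a_1,\dots,a_n,a_{n+1},0,\dots,0)$ one has $\norm{\eta}=|sa_{n+1}|$ along the line $\R a$, and since both $\prod\operatorname{sinc}$ and $j_{m/2}$ are even functions, the integral from $-\infty$ to $\infty$ equals twice the integral over $[0,\infty)$; this folds $\frac{1}{2\pi}$ into the prefactor $\pi^{m/2-1}/\Gamma(m/2+1)$ stated in the theorem.

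The main technical point to handle is that $s\mapsto\widehat{\chi_Z}(sa)$ is in general only \emph{conditionally} integrable along $\R a$: for a generic $a$ with $a_{n+1}>0$ the integrand decays like $s^{-n-(m+1)/2}$, which is amply integrable for $n,m\geq 1$, but the degenerate directions (some $a_j=0$, or $a_{n+1}=0$) have to be treated by regularisation — convolving $\chi_Z$ with a compactly supported smooth mollifier, applying the slice identity to the smoothed body where every integral is absolutely convergent, and then passing to the limit using the continuity of the section function and the uniform decay of $\widehat{\chi_Z}$ away from the coordinate hyperplanes. This standard justification is the only step that requires care; the rest is assembly.
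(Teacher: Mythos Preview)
Your approach is essentially the same as the paper's: both compute the Fourier transform of the parallel section function $t\mapsto\vol_{n+m-1}(H_a^t\cap Z)$, identify it via Fubini with $\widehat{\chi_Z}(sa)$, factor the latter using the product structure $Z=\frac12 B_\infty^n\times rB_2^m$, quote the standard transforms of the cube and the Euclidean ball, and then apply Fourier inversion at $t=0$ and fold by evenness. The paper simply writes ``by the Fourier inversion formula'' at the end, whereas you are more explicit about the integrability issue in the degenerate directions and sketch the mollification argument; that is the only notable difference, and it is a point in your favour rather than a divergence in method.
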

Note that $j_{\frac m 2 }(s)=\frac{\sin s}{s}$, so for $m=1$ we get the formula for the cube.

Using H\"older's inequality in order to get an upper bound on the section volume is also a classical method. In Section \ref{sec:cylinder estimates} we follow this approach and find estimates on the volume. 
\begin{satz}\label{thm:bound cylinder}
Let $n>1,m>1$ and $r>0$.  Then for all $a\in\R^{n+m}$ with $\norm{a}=1$,
\begin{align}\label{eq:boundcylinder}
\vol_{n+m-1} (H_a\cap Z) &\leq  \begin{cases}
 r^{m} \frac{\pi^{\frac{m}{2}}}{\Gamma \left(\frac{m}{2}  +1\right)} \cdot \sqrt{2},   &r\geq \frac{\Gamma \left(\frac{m}{2}+1\right)}{\Gamma\left(\frac{m}{2}+\frac 1 2\right)}\frac{1}{\sqrt{\pi}}\\
r^{m-1} \frac{\pi^{\frac{m-1}{2}}}{\Gamma \left(\frac{m-1}{2}  +1\right)} \cdot \sqrt{2} , \quad &r <\frac{\Gamma \left(\frac{m}{2}+1\right)}{\Gamma\left(\frac{m}{2}+\frac 1 2\right)}\frac{1}{\sqrt{\pi}}.
\end{cases}
\end{align}
For $r\geq \frac{\Gamma \left(\frac{m}{2}+1\right)}{\Gamma\left(\frac{m}{2}+\frac 1 2\right)}\frac{1}{\sqrt{\pi}}$, the bound is attained for $a=\left(\frac{1}{\sqrt{2}},\frac{1}{\sqrt{2}},0,\dots,0\right)$. 
\end{satz}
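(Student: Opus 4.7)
The plan is to estimate the integral representation in Theorem~\ref{thm: cylinder formula} by a generalized H\"older inequality with exponents $p_j := 1/a_j^2$ attached to the $j$-th sine factor for $1\leq j \leq n$ and $p_{n+1} := 1/a_{n+1}^2$ attached to the Bessel factor (whenever $a_{n+1}>0$); the admissibility condition $\sum_k p_k^{-1}=1$ is exactly $\norm{a}=1$. After the natural rescalings $u=a_j s/2$ and $u=a_{n+1}rs$ the sine norms are controlled by Ball's classical inequality
\[
\int_0^\infty \left|\frac{\sin u}{u}\right|^p du \leq \frac{\pi}{2}\sqrt{\frac{2}{p}},\qquad p\geq 2,
\]
and the Bessel norm by the analog for $j_{m/2}$ established earlier in Section~\ref{sec:cylinder estimates} (the main technical input of the paper).

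The sine contributions telescope in the familiar Ball-type fashion: the Jacobian $(2/a_j)^{a_j^2}$ combines with $(a_j\pi/\sqrt{2})^{a_j^2}$ to yield $(\sqrt{2}\pi)^{a_j^2}$, and the product over $j=1,\dots,n$ produces $(\sqrt{2}\pi)^{1-a_{n+1}^2}$. Together with the prefactor $r^m\pi^{m/2-1}/\Gamma(m/2+1)$ from the formula and the Bessel estimate, the whole upper bound becomes a function $F(\alpha,r)$ of the single parameter $\alpha := a_{n+1}^2\in[0,1]$. At $\alpha=0$ (i.e.\ $a_{n+1}=0$) the section factors as $\vol_{n-1}(H_{a'}\cap \frac{1}{2}B_\infty^n)\cdot\vol_m(rB_2^m)$, and Ball's cube bound reproduces the first line of~\eqref{eq:boundcylinder}; attainment at $a=(1/\sqrt{2},1/\sqrt{2},0,\dots,0)$ is automatic since this is the standard Ball-extremal cube direction for which $\vol_{n-1}(H_{a'}\cap \frac{1}{2}B_\infty^n)=\sqrt{2}$. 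The Bessel analog, used when $\alpha>0$, produces the second line of~\eqref{eq:boundcylinder}; comparison of the two candidate bounds shows that they cross precisely at $r_* = \Gamma(m/2+1)/(\sqrt{\pi}\,\Gamma(m/2+1/2))$, and the maximum between them flips at this threshold, yielding the case split of the theorem.

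I expect the main obstacle to be twofold: first, establishing the Bessel analog of Ball's inequality with a constant sharp enough that the crossover of the two candidate bounds comes out to $r_*$ and that the $\alpha$-maximum of $F(\alpha,r)$ is indeed dominated by the two bounds in~\eqref{eq:boundcylinder}; and second, handling normal vectors for which some $a_j>1/\sqrt{2}$, since then Ball's inequality is inapplicable at exponent $p_j<2$. By $\norm{a}=1$ at most one such coordinate can occur, and the standard remedy is to bound the offending factor trivially by $1$ and apply H\"older only to the remaining coordinates; a short bookkeeping then confirms that the resulting exceptional estimates are still dominated by the bounds in~\eqref{eq:boundcylinder}.
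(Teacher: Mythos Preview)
Your overall architecture---H\"older with exponents $1/a_j^2$, Ball's inequality for the sinc factors, the Bessel analogue (Theorem~\ref{thm_bessel_int}) for the $j_{m/2}$ factor, and a separate treatment of directions with a dominant coordinate---is exactly what the paper does. Two points, however, do not go through as you describe them.

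First, the large-coordinate case. Bounding the offending factor by $1$ and then ``applying H\"older to the remaining coordinates'' fails: the remaining exponents $1/a_k^2$ satisfy $\sum_{k\ne j} a_k^2 = 1-a_j^2 < 1/2$, so the H\"older condition is not met; and if you rescale to $q_k=(1-a_j^2)/a_k^2$ to force $\sum 1/q_k=1$, some $q_k$ can drop below $2$, at which point neither Ball's nor the Bessel inequality is available. The paper instead uses orthogonal projection onto $\{x_j=0\}$: one has $\vol_{n+m-1}(H_a\cap Z)=|a_j|^{-1}\vol_{n+m-1}(P(H_a\cap Z))\le |a_j|^{-1}\vol(P(Z))<\sqrt{2}\,\vol(P(Z))$, and $\vol(P(Z))$ is the volume of a one-lower cylinder, computed directly.

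Second, and more consequentially, the second line of~\eqref{eq:boundcylinder} is \emph{not} produced by the Bessel analogue. Feeding Theorem~\ref{thm_bessel_int} into the H\"older estimate and sending $\alpha\to 1$ gives
\[
r^{m-1}\,\frac{\pi^{(m-1)/2}}{\Gamma(m/2+1)}\,\sqrt{\tfrac{m}{2}+1},
\]
which is strictly \emph{smaller} than the stated bound $r^{m-1}\pi^{(m-1)/2}\sqrt{2}/\Gamma((m-1)/2+1)$ (this comparison is precisely Lemma~\ref{lem:quotient gamma}). The second line actually comes from the projection argument applied when $a_{n+1}>1/\sqrt{2}$ (Case~3 in the paper), which yields exactly $\sqrt{2}\,\vol_{m-1}(rB_2^{m-1})$. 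The threshold $r_*$ in the theorem is the crossover between the two displayed lines---equivalently between the Case~2 and Case~3 projection bounds---not between two H\"older endpoints; the internal Case~1 crossover sits at the smaller value $\sqrt{(m/2+1)/(2\pi)}$. Without the projection step you would both misidentify the small-$r$ bound and leave the regime $a_{n+1}>1/\sqrt{2}$ uncontrolled.
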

For the three-dimensional case ($n=2$, $m=1$) real calculus suffices to characterize the maximal section. Note that the intersecting hyperplane can be described by one variable. We find
\begin{satz}\label{thm:3d cylinder}
Let $Z:=[-\frac 1 2,\frac 1 2 ] \times r\cdot B_2^2$. Depending on $r$ we have:\\
(i) For $r>\frac{1}{2\sqrt{3}}$ a section orthogonal to $(\sqrt{1-\alpha^2},\alpha,0)$ for some  $\alpha \in \left(\sqrt{\frac{1}{4r^2+1}},1\right)$ is maximal. So the maximal section is a truncated ellipse.\\
(ii) For $r\leq \frac{1}{2\sqrt{3}}$ the section orthogonal to $(0,1,0)$ is maximal. So the maximal section is a rectangle.
\end{satz}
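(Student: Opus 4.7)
The plan is to reduce the two-variable optimization (over unit normals) to a one-variable problem by symmetry, write down a closed-form expression for $f(\alpha) := \vol_2(H_a \cap Z)$, and then analyze $f$ by elementary calculus. By the rotational symmetry of $Z$ in the last two coordinates together with central symmetry, it suffices to take $a = (\sqrt{1-\alpha^2}, \alpha, 0)$ with $\alpha \in [0, 1]$. Parametrizing $H_a$ by the orthonormal basis $(\alpha, -\sqrt{1-\alpha^2}, 0)$, $(0, 0, 1)$ and intersecting with $Z$ exhibits two regimes: the \emph{full ellipse} regime $0 \leq \alpha \leq \alpha_0 := (4r^2+1)^{-1/2}$, in which $f(\alpha) = \pi r^2 / \sqrt{1-\alpha^2}$ is strictly increasing; and the \emph{truncated ellipse} regime $\alpha_0 \leq \alpha \leq 1$, in which, setting $\beta := \arcsin(\sqrt{1-\alpha^2}/(2r\alpha)) \in [0, \pi/2]$, an elementary computation gives $f(\alpha) = \frac{r^2}{\sqrt{1-\alpha^2}}(2\beta + \sin 2\beta)$, with endpoint values $f(\alpha_0) = \frac{\pi r}{2}\sqrt{4r^2+1}$ and $f(1) = 2r$.

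Next I differentiate $f$ on $(\alpha_0, 1)$. Using $\sqrt{1-\alpha^2} = 2r\alpha\sin\beta$ together with $\alpha^2 = 1/(1+4r^2\sin^2\beta)$ to pass to the variable $u := 2\beta \in (0, \pi)$, I expect
\[
f'(\alpha) \;=\; \frac{r^2\, Q(2\beta)}{\alpha\,(1-\alpha^2)^{3/2}\,(1+4r^2 \sin^2\beta)}, \qquad Q(u) \;:=\; u - (1+4r^2)\sin u + 2r^2 \sin 2u,
\]
so that $\operatorname{sign} f'(\alpha) = \operatorname{sign} Q(2\beta)$. Routine differentiation of $Q$ gives $Q(0) = Q'(0) = Q''(0) = 0$, $Q'''(0) = 1 - 12r^2$, and $Q(\pi) = \pi > 0$; in particular $f'(\alpha_0^+) > 0$ always.

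For case (ii) ($r \leq 1/(2\sqrt{3})$, equivalently $12 r^2 \leq 1$), the key step is to prove $Q \geq 0$ on $[0,\pi]$. Write $Q(u) = u - R(u)$ with $R(u) := (1+4r^2)\sin u - 2r^2 \sin 2u$; then $R(0) = 0$, $R'(0) = 1$, and $R''(u) = \sin u\,(16 r^2 \cos u - 1 - 4 r^2)$. Since $12 r^2 \leq 1$ forces $16 r^2 \leq 1 + 4 r^2$, we have $R''(u) \leq 0$ on $[0, \pi]$, so $R$ is concave. The tangent-line inequality at $0$ then yields $R(u) \leq R(0) + R'(0)\,u = u$, i.e.\ $Q \geq 0$. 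Hence $f' \geq 0$ on $(\alpha_0, 1)$ and $f$ is nondecreasing on all of $[0,1]$, attaining its maximum at $\alpha = 1$, the rectangle. For case (i) ($r > 1/(2\sqrt{3})$), $Q'''(0) < 0$ combined with $Q$ vanishing to order two at $0$ yields $Q(u) < 0$ for small $u > 0$; therefore $f'(\alpha) < 0$ as $\alpha \to 1^-$, so $\alpha = 1$ cannot be a maximizer. Combined with $f'(\alpha_0^+) > 0$ and $f$ strictly increasing on $[0, \alpha_0]$, the continuous function $f$ attains its maximum on $[0,1]$ at some $\alpha^* \in (\alpha_0, 1)$.

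The main obstacle is the inequality $Q \geq 0$ on $[0,\pi]$ in case (ii): the concavity argument works precisely because $16 r^2 \leq 1 + 4r^2$ is equivalent to $r \leq 1/(2\sqrt{3})$, which is exactly how the threshold in the theorem arises; case (i) is then essentially a corollary of the sign of $Q'''(0)$ at the same critical radius.
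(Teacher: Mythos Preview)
Your proof is correct and follows the same overall strategy as the paper: reduce to a one-parameter family via symmetry, write down the explicit area formula (which agrees with Lemma~\ref{lemma_cylinder} after the substitution $\sin\beta=\sqrt{1-\alpha^2}/(2r\alpha)$), differentiate, and analyze the sign of the derivative on $(\alpha_0,1)$. The difference lies in the final step. The paper sets $A'(\alpha)=0$, substitutes $x=\sin\beta$, and arrives at the transcendental equation $\arcsin(x)/x=(1+8r^2x^2)\sqrt{1-x^2}$, then appeals (rather tersely) to Taylor expansions of both sides to decide when a root exists. You instead pass to $u=2\beta$ and study the sign of $Q(u)=u-(1+4r^2)\sin u+2r^2\sin 2u$ directly; the concavity argument for $R(u)=(1+4r^2)\sin u-2r^2\sin 2u$ on $[0,\pi]$ is a clean way to obtain $Q\ge 0$ in case~(ii), and it makes the threshold $12r^2=1$ appear transparently as the condition $16r^2\le 1+4r^2$. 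For case~(i), your use of $Q(0)=Q'(0)=Q''(0)=0$, $Q'''(0)=1-12r^2<0$ to force $Q<0$ near $0$ (hence $f'<0$ near $\alpha=1$) is essentially the Taylor argument the paper alludes to, but packaged more explicitly. Both approaches are elementary; yours is a bit more self-contained because it avoids comparing two separate Taylor expansions and replaces that comparison by a single concavity inequality.
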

In Section \ref{sec:cylinder inequality} we prove the main integral inequality, which is also interesting by itself. For the proof we use three slightly different approaches, depending on $m$.  The inequality states
\begin{satz}\label{thm_bessel_int} 
For all $m\in \mathbb{N}, m\geq 2$, and $p\in \R,p\geq 2$, we have
\[
\mathcal{J}_m(p):=\sqrt{p} \int_0^{\infty} \left| j_{\frac{m}{2}}(s)\right|^p  \D s\leq \sqrt{\pi}\sqrt{\frac{m}{2}+1}
\]
and $\lim_{p\to\infty}{\mathcal{J}}_m(p)= \sqrt{\pi}\sqrt{\frac{m}{2}+1}$.
\end{satz}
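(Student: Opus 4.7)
The plan is to follow the strategy of K.~Ball's classical argument for $|\sin s/s|^p$, adapted to the normalized Bessel function $j_{m/2}$. The limit already predicts the right method: from the Taylor expansion
\[
j_{m/2}(s) = 1 - \frac{s^2}{2m+4} + O(s^4)
\]
one expects the Gaussian majorant $e^{-s^2/(2m+4)}$ to control the behaviour near the origin, and indeed
\[
\sqrt{p}\int_0^\infty e^{-ps^2/(2m+4)}\,\D s = \sqrt{\pi}\sqrt{\tfrac{m}{2}+1},
\]
so the target bound would follow at once from a pointwise inequality $|j_{m/2}(s)| \leq e^{-s^2/(2m+4)}$ valid on all of $[0,\infty)$. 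Such a global bound is impossible, since Bessel asymptotics force $|j_{m/2}(s)|$ to decay only polynomially, like $s^{-(m+1)/2}$. I would therefore fix a cutoff $s_0 = s_0(m)$ and treat $[0,s_0]$ and $[s_0,\infty)$ separately.

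On $[0,s_0]$ I would establish $|j_{m/2}(s)| \leq e^{-s^2/(2m+4)}$ pointwise. The natural tools are the power series
\[
j_{m/2}(s) = \sum_{k=0}^\infty \frac{(-1)^k \Gamma(\tfrac{m}{2}+1)}{k!\,\Gamma(\tfrac{m}{2}+k+1)}\left(\tfrac{s}{2}\right)^{2k},
\]
together with a termwise comparison against $\exp(-s^2/(2m+4))$, or alternatively Bessel's ODE, which yields a convenient differential inequality for $\log j_{m/2}$ on the interval up to its first zero. Integrating the resulting Gaussian bound and multiplying by $\sqrt{p}$ recovers the full target constant $\sqrt{\pi(m/2+1)}$ from this region alone.

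On $[s_0,\infty)$ I would exploit the classical asymptotic $|j_{m/2}(s)| \leq C_m\,s^{-(m+1)/2}$ together with a uniform bound $|j_{m/2}(s)| \leq \delta_m < 1$, where $\delta_m$ is essentially the size of $j_{m/2}$ at its first local extremum past $s_0$. Writing $|j_{m/2}|^p = |j_{m/2}|^{p-2}\,|j_{m/2}|^2 \leq \delta_m^{p-2}\,|j_{m/2}|^2$ gives
\[
\int_{s_0}^\infty |j_{m/2}(s)|^p\,\D s \leq \delta_m^{p-2}\int_{s_0}^\infty \frac{C_m^2}{s^{m+1}}\,\D s,
\]
which decays exponentially in $p$ and, after multiplication by $\sqrt{p}$, is absorbed into the Gaussian tail $\sqrt{p}\int_{s_0}^\infty e^{-ps^2/(2m+4)}\D s$ missing from the main piece. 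The endpoint $p=2$ can be verified separately, e.g.\ via a Weber-Schafheitlin-type identity, which gives $\|j_{m/2}\|_{L^2}^2$ in closed form. The limit $p\to\infty$ is then a standard Laplace/dominated convergence argument: substituting $s=t/\sqrt{p}$ turns $\mathcal{J}_m(p)$ into $\int_0^\infty|j_{m/2}(t/\sqrt{p})|^p\,\D t$, the integrand converges pointwise to $e^{-t^2/(2m+4)}$, and the pointwise Gaussian bound supplies the majorant.

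The main obstacle, and presumably the reason for the three separate arguments the paper mentions, is the pointwise Gaussian bound on $[0,s_0]$. For small half-integer $m/2$ there are elementary closed forms for $j_{m/2}$ (products of trigonometric functions and polynomials in $1/s$) that render the inequality essentially transparent; for integer $m/2$ (even $m$) one must rely on series comparison or differential-equation arguments; and for large $m$ the first zero of $j_{m/2}$ moves outward like $\sqrt{m}$, so the cutoff $s_0$ has to be rescaled accordingly to keep the tail contribution manageable. Once the Gaussian inequality is in place on a sufficiently large initial interval, the rest of the proof consists of routine Bessel asymptotics and a Hölder-type interpolation.
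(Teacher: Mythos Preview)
Your overall plan --- Gaussian majorant on an initial interval, Bessel asymptotics on the tail --- matches the spirit of the paper's treatment of $m\in\{3,4\}$, but the absorption step has a genuine gap. With only the first-order bound $|j_{m/2}(s)|\le e^{-s^2/(2m+4)}$ on $[0,s_0]$, the slack you create is the missing Gaussian tail
\[
\sqrt{p}\int_{s_0}^\infty e^{-ps^2/(2m+4)}\,\D s \ \asymp\ \tfrac{1}{\sqrt{p}}\,e^{-ps_0^2/(2m+4)},
\]
which is itself exponentially small in $p$. Your Bessel tail bound $\sqrt{p}\,\delta_m^{\,p-2}K_m$ is also exponentially small, so the comparison hinges on whether $\delta_m<e^{-s_0^2/(2m+4)}$ and on the prefactors. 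For $m=2$ with $s_0=4$ (the largest interval on which the pointwise Gaussian bound is available) one has $\delta_2\approx 0.132$ against $e^{-2}\approx 0.135$, and at $p=2$ the Bessel tail $\sqrt{2}\int_4^\infty|j_1|^2\D s$ is of order $10^{-1}$ while the Gaussian deficit is of order $10^{-2}$: the absorption fails outright. Verifying $p=2$ separately via a Weber--Schafheitlin identity does not rescue this, because you then have no tool on an interval $(2,p_1)$.

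The paper closes this gap by two devices you do not mention. For $m\in\{3,4\}$ (and the same argument covers $m=2$) it sharpens the pointwise inequality to
\[
|j_{m/2}(s)|\le \exp\!\Big(-\tfrac{s^2}{2m+4}-\tfrac{s^4}{4(m+2)^2(m+4)}\Big),\qquad 0\le s\le \tfrac m2+3,
\]
and the extra $s^4$ term produces, after integration, a \emph{polynomial} deficit of size $\sim \tfrac{3}{4p(m+4)}\sqrt{\pi(\tfrac m2+1)}$; this is what actually absorbs the geometrically decaying tail for every $p\ge 2$. For $m=2$ and for all $m\ge 5$ the paper instead invokes the Nazarov--Podkorytov distribution-function lemma: one shows that the distribution functions of $|j_{m/2}|$ (for $m\ge 5$, of a monotone majorant $\tilde j_{m/2}$ built from the Bessel envelope) and of $e^{-s^2/(2m+4)}$ cross exactly once, whereupon a single equality of $p_0$-th moments with some $p_0<2$ propagates the inequality to all $p\ge p_0$. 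This lemma is precisely the missing idea that handles the region where the naive pointwise comparison breaks down. Incidentally, the three-way split in the paper is $m=2$, $m\in\{3,4\}$, $m\ge 5$, and it is dictated by which of these two mechanisms makes the constants close --- not by half-integer versus integer Bessel order as you guessed.
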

Recall Keith Ball's inequality \cite{Ball1986}. 
 For $p\geq 2$,
\begin{equation}\label{thm_ball}
\mathcal{J}_{1}(p)= \sqrt{ p } \int_0^{\infty} \left| \frac{\sin(u)}{u} \right|^{p} \D u 
\leq \frac{\pi}{\sqrt{2}},
\end{equation}
and $\lim_{p\to\infty} \mathcal{J}_1(p) =\sqrt{\frac{3}{2}\pi}< \frac {\pi}{\sqrt{2}}$.
\section{Volume formula}\label{sec:cylinder formula} We apply the standard method.
\begin{proof}[Proof of Theorem \ref{thm: cylinder formula}]
Define
$A(a, t):=\vol_{n+m-1} (H_a^t\cap Z)$ for $ a \in \R^{n+m}$, $\norm{a}=1$ and $t \geq 0$;
in particular $A(a):=A(a,0)$. 
We apply the Fourier transformation and the inversion formula to the function $t\mapsto A(a,t)$. 
With Fubini's theorem and the well-known integrals
\[
\int_{B^n_2} \exp \left(-\I s  \skalar{x}{a}\right) \D x 
=\frac{\pi^{\frac{n}{2}} }{\Gamma \left(\frac{n}{2}+1\right)} j_{\frac{n}{2}} (s\norm{a}), \quad \text{ for }s>0, a\in \R^n
\]
and
\[
\int_{\left[-\frac 1 2,\frac 1 2\right]^n} \exp \left(- \I s \skalar{x}{a}\right) \D x
=\prod_{j=1}^{n}\frac{\sin \left(\frac{a_js}{2}\right)}{\frac{a_j s}{2}},\quad \text{ for }s\in \R, a\in \R^{n}
\]
we have
\begin{align*}
					&\phantom{=} \left(2\pi\right)^{\frac 1 2}\hat{A}(a,s) \\
					&= \int_{\R} A(a,t) \exp \left(-\I st\right) \D t \\
		       &= \int_{\R}  \int_{\skalar{x}{a}=t} \chi_{\left[-\frac 1 2,\frac 1 2\right]^n} ((x_1,\dots,x_n)) \ 		 
		   		\chi_{r B_2^m}((x_{n+1},\dots,x_{n+m})) \exp \left(-\I st\right)\D x \D t \\
		      &= \int_{\R^{n+m}} \chi_{\left[-\frac 1 2,\frac 1 2\right]^n} ((x_1,\dots,x_n))		 \
		   		\chi_{r B_2^m}((x_{n+1},\dots,x_{n+m})) \exp \left(-\I s\skalar{x}{a}\right)\D x\\	
		   	  &=\int \limits_{\left[-\frac 1 2,\frac 1 2\right]^n} \exp\Big(-\I s \sum_{j=1}^n a_j x_j\Big) \D (x_1,\dots,x_n)   
		   		\int \limits_{r B_2^m} \exp\Big(-\I s  \sum_{j=n+1}^{n+m}a_{j} x_{j}\Big)\D x\\ 
					&= \prod_{j=1}^n \frac{\sin(\frac{a_j s}{2})}{\frac{a_j s}{2}} 
								\cdot	
					r^m \frac{\pi^{\frac{m}{2}}}{\Gamma \left(\frac{m}{2}+1\right)}
 j_{\frac{m}{2}} (rsa_{n+1}). 
\end{align*}
Finally, by the Fourier inversion formula we get the formula stated in \mbox{Theorem \ref{thm: cylinder formula}}.
\end{proof}

For the three-dimensional cylinder, i.e. $n=1$ and $m=2$, using an equation from \cite[6.693 (4), p. 720]{Gradshteyn2007} we get
\begin{lemma}\label{lemma_cylinder}
Let $Z$ be the three-dimensional cylinder with radius $r>0$. For $\alpha \in [0,1]$ let $a=\left(\sqrt{1-\alpha^2},\alpha,0\right)$. Then the volume, i.e.\ the area, of the section $H_a\cap Z$ is given by the function $A\colon[0,1]\to \R,$
\begin{equation*}
	A(\alpha)=
	\begin{cases}
		\pi r\frac{r}{\sqrt{1-\alpha^2}}
				&\text{ for } \ 0 \leq \alpha\leq \frac{1}{\sqrt{1+4r^2}}\\
		 \frac{r}{\alpha} \sqrt{1-\frac{1-\alpha^2}{4\alpha^2r^2}} + \frac{2r^2}{\sqrt{1-\alpha^2}} \arcsin\left(\frac{\sqrt{1-\alpha^2}}{2\alpha r}\right)
				&\text{ for } \ \frac{1}{\sqrt{1+4r^2}} < \alpha <1 \\
		2r &\text{ for }   \ \alpha =1.
	\end{cases}
\end{equation*}
\end{lemma}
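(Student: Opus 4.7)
The plan is to apply Theorem~\ref{thm: cylinder formula} to the three-dimensional cylinder with $n=1$ and $m=2$. Using $j_{1}(s)=2J_{1}(s)/s$ and substituting $a_{1}=\sqrt{1-\alpha^{2}}$, $a_{2}=\alpha$, Theorem~\ref{thm: cylinder formula} gives
\[
A(\alpha)=r^{2}\int_{0}^{\infty}\frac{\sin\!\bigl(\tfrac{\sqrt{1-\alpha^{2}}}{2}s\bigr)}{\tfrac{\sqrt{1-\alpha^{2}}}{2}s}\cdot\frac{2J_{1}(\alpha r s)}{\alpha r s}\,\D s
= \frac{2r^{2}}{c d}\int_{0}^{\infty}\frac{\sin(cs)\,J_{1}(ds)}{s^{2}}\,\D s,
\]
where I abbreviate $c:=\tfrac{1}{2}\sqrt{1-\alpha^{2}}$ and $d:=\alpha r$. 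The next step is to evaluate this Weber--Schafheitlin-type integral by the formula in \cite[6.693(4)]{Gradshteyn2007}; that formula produces a closed expression for $\int_{0}^{\infty}s^{-2}\sin(cs)J_{\nu}(ds)\,\D s$ which splits into two cases according to whether $c\geq d$ or $c<d$.

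The threshold $c=d$ is equivalent to $1-\alpha^{2}=4\alpha^{2}r^{2}$, i.e.\ $\alpha=1/\sqrt{1+4r^{2}}$. Geometrically this matches the lemma: cutting the infinite cylinder $rB_{2}^{2}\times\R$ by $H_{a}$ produces an ellipse with semi-axes $r$ and $r/\sqrt{1-\alpha^{2}}$, hence area $\pi r^{2}/\sqrt{1-\alpha^{2}}$, and the condition $c\geq d$ is exactly the condition that this ellipse fits in the slab $|x_{1}|\leq \tfrac{1}{2}$. So the first case of \cite[6.693(4)]{Gradshteyn2007} should reproduce the first branch of the lemma. In the complementary regime $c<d$ the ellipse is truncated by the end-disks of $Z$, and the tabulated integral then yields the sum of the square-root term (contribution of the two elliptical caps cut off by $x_{1}=\pm\tfrac{1}{2}$) and the $\arcsin$ term (contribution of the remaining mid-portion), which is exactly the second branch of the lemma.

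The endpoint $\alpha=1$ is not accessible directly from the closed form, since both $1/\sqrt{1-\alpha^{2}}$ and the $\arcsin$-argument $\sqrt{1-\alpha^{2}}/(2\alpha r)$ degenerate. I would handle this value separately: for $a=(0,1,0)$ the section $H_{a}\cap Z$ is the rectangle $[-\tfrac{1}{2},\tfrac{1}{2}]\times\{0\}\times[-r,r]$, which has area $2r$; alternatively, taking the limit $\alpha\to 1^{-}$ of the middle branch and using $\arcsin(x)\sim x$ at $0$ gives $\tfrac{r}{\alpha}\cdot 1+\tfrac{2r^{2}}{\sqrt{1-\alpha^{2}}}\cdot\tfrac{\sqrt{1-\alpha^{2}}}{2\alpha r}\to 2r$, so the three pieces glue continuously.

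The only real obstacle is bookkeeping: extracting the precise algebraic form of the right-hand side from \cite[6.693(4)]{Gradshteyn2007} (which involves Bessel functions of general order and has several conventions around prefactors) and then simplifying after the substitution $c=\tfrac{1}{2}\sqrt{1-\alpha^{2}}$, $d=\alpha r$. Because $J_{1}$ is the simplest non-trivial order and the integrand has the classical form $s^{-2}\sin(cs)J_{1}(ds)$, I do not expect any analytic difficulty beyond verifying that the two branches of the tabulated formula match the two branches stated in the lemma.
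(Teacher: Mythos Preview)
Your proposal is correct and follows exactly the paper's approach: the paper simply states that, for $n=1$, $m=2$, the formula of Theorem~\ref{thm: cylinder formula} combined with \cite[6.693(4), p.~720]{Gradshteyn2007} yields the lemma, and notes that the result can alternatively be obtained by elementary geometric considerations. Your additional discussion of the threshold $c=d$, the geometric interpretation of the two branches, and the handling of the endpoint $\alpha=1$ are all in line with (and more detailed than) what the paper provides.
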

The three cases correspond to the geometric shape of the section, namely an ellipse resp. a disk, a truncated ellipse and a rectangle. Clearly, this formula can also be obtained by elementary geometric considerations. 
\section{Volume estimates}\label{sec:cylinder estimates}
\subsection{Three-dimensional case} The three-dimensional case can be treated by real calculus. 
\begin{proof}[Proof of Theorem \ref{thm:3d cylinder}]
The function $A$ from Lemma \ref{lemma_cylinder} defined on the closed interval $[0,1]$ is differentiable. 
Let $\alpha^*:=\frac{1}{\sqrt{1+4r^2}}$. 
For $0< \alpha < \alpha^*$ we have $A^{\prime}(\alpha)=\frac{\pi r^2 \alpha}{(1-\alpha^2)^{\frac{3}{2}}}$. This  is larger than $0$ for all $r>0$. For the left derivative in $\alpha^*$ we get $ A^{\prime}_{-}(\alpha^*)=\frac{\pi(1+4r^2)}{8r}$.

For $\alpha^*<\alpha < 1$ we find 
\begin{align}\label{eq:derivative_cylinder}
A^{\prime}(\alpha)&=
\frac{1}{4r\alpha^4}\frac{1}{\sqrt{1+\frac{1}{4r^2}-\frac{1}{4r^2\alpha^2}}}
-\frac{r}{\alpha^2}\sqrt{1+\frac{1}{4r^2}-\frac{1}{4r^2\alpha^2}}   \\  \nonumber
&\phantom{=} +\frac{2 \alpha r^2}{(1-\alpha^2)^{\frac{3}{2}}}\arcsin\left(\frac{\sqrt{1-\alpha^2}}{2\alpha r}\right)
- \frac{r}{\alpha^2(1-\alpha^2)}\frac{1}{\sqrt{1+\frac{1}{4r^2}-\frac{1}{4r^2\alpha^2}}}.
\end{align}
Compute the limit of (\ref{eq:derivative_cylinder}) for $\alpha \to \alpha^*$, $\alpha>\alpha^*$. 
Note that for $\alpha =\alpha^*$ we have $\sqrt{1+\frac{1}{4r^2}-\frac{1}{4r^2\alpha^2}}=0$. The sum of the first and the last summand of (\ref{eq:derivative_cylinder}) tends to $0$ by L'H\^opital's rule. The second summand tends to $0$ as well. The third summand tends to $\frac{\pi(1+4r^2)}{8r}$, which coincides with the left derivative in $\alpha^*$. So $A$ is differentiable in $(0,1)$ with $A^{\prime}(\alpha^*)=\frac{\pi(1+4r^2)}{8r}>0$ for all $r>0$.

In particular, $A^{\prime}$ is positive on $(0,\alpha^{*}]$. So $A$ is maximal for some $\alpha \in (\alpha^*,1]$. The maximum is attained for some $\alpha < 1$ if and only if $A^{\prime}(\alpha)$ has a zero in $(\alpha^*,1)$. Otherwise the function $A$ is monotonously increasing from $0$ to $1$ and attains the maximum for $\alpha=1$.

For $\alpha \in (\alpha^*,1)$ the equation $A'(\alpha)=0$ is equivalent to the following equation:
\begin{align*}
\frac{2 \alpha r^2}{(1-\alpha^2)^{\frac{3}{2}}}\arcsin\left(\frac{\sqrt{1-\alpha^2}}{2\alpha r}\right)
&=\frac{r}{\alpha^2(1-\alpha^2)}\frac{1}{\sqrt{1+\frac{1}{4r^2}-\frac{1}{4r^2\alpha^2}}} \\
&\phantom{=}+\frac{r}{\alpha^2}\sqrt{1+\frac{1}{4r^2}-\frac{1}{4r^2\alpha^2}}   \\  
&\phantom{=}-\frac{1}{4r\alpha^4}\frac{1}{\sqrt{1+\frac{1}{4r^2}-\frac{1}{4r^2\alpha^2}}}.
\end{align*}
Multiplying this by $\frac{1-\alpha^2}{r}$ and adding the first and the third summand on the right-hand side this simplifies to
\begin{align}\label{eq:cond_cylinder}
\frac{\arcsin\left(\frac{\sqrt{1-\alpha^2}}{2\alpha r}\right)}{\frac{\sqrt{1-\alpha^2}}{2\alpha r}} 
				=\frac{2-\alpha^2}{\alpha^3}\sqrt{\alpha^2+\frac{\alpha^2}{4r^2}-\frac{1}{4r^2}}.
\end{align}
Set $x:=\frac{\sqrt{1-\alpha^2}}{2\alpha r}$, then $\alpha=\frac{1}{\sqrt{1+4r^2x^2}}$. Equation (\ref{eq:cond_cylinder}) reads as
\begin{align}\label{eq:cond_equiv}
\frac{\arcsin(x)}{x} 
				=(1+8r^2x^2)\sqrt{1-x^2}.
\end{align}
So $A^{\prime}(\alpha)=0$ for some $\alpha \in (\alpha^*,1)$ is equivalent to (\ref{eq:cond_equiv}) for some $x\in (0,1)$.
Estimating both sides of equation (\ref{eq:cond_equiv}) using Taylor's theorem we find that $A^{\prime}$ has a zero smaller than $1$ if and only if $r>\frac 1 {2\sqrt{3}}$. 
\end{proof}
\subsection{General dimension}
The first step is the application of H\"older's inequality.
\begin{lemma}\label{lem_hoelder_cylinder}
Let $a\in\R^{n+m}$ be a normal vector. Then
\begin{equation*} \label{eqn_hoelder}
\vol_{n+m-1}(H_a\cap Z) \leq r^m \frac{\pi^{\frac{m}{2}-1}}{\Gamma \left(\frac{m}{2}+1\right)}
\prod_{j=1}^{n} \bigg(2 \mathcal{J}_{1}\Big(\frac{1}{a_j^2}\Big)\bigg)^{a_j^2} \ 
\bigg(\frac{1}{r}\mathcal{J}_m\Big(\frac{1}{a_{n+1}^2}\Big)\bigg)^{a_{n+1}^2} ,	
\end{equation*}
where 
\begin{align*}
\mathcal{J}_m(p)&:= \sqrt{ p } \left(\int_0^{\infty} \left| j_{\frac{m}{2}}(u) \right|^{p}du \right).
\end{align*}

\begin{proof}
We apply H\"older's inequality to the formula from Theorem \ref{thm: cylinder formula} and then substitute $u=\frac{a_j s}{2}$ resp. $u=a_{n+1}rs$:
\begin{align*}
&\phantom{=} \vol_{n+m-1}(H_a\cap Z) \nonumber \\
&\leq r^{m} \frac{\pi^{\frac{m}{2}-1}}{\Gamma (\frac{m}{2}  +1)} 
		\prod_{j=1}^n \bigg( \frac{2}{a_j} \int\limits_0^{\infty} \left| \frac{\sin u}{u}\right|^{\frac{1}{a_j^2}} du\bigg)^{a_j^2} \
	\bigg( \frac{1}{r a_{n+1}} \int\limits_0^{\infty} \left| j_{\frac{m}{2}}(u)\right|^{\frac{1}{a_{n+1}^2}} du \bigg)^{a_{n+1}^2} \nonumber \\
&= r^{m} \frac{\pi^{\frac{m}{2}-1}}{\Gamma (\frac{m}{2}  +1)} 
			\prod_{j=1}^n\bigg(2\mathcal{J}_1\Big(\frac{1}{a_j^2}\Big)\bigg)^{a_j^2}
			\ \bigg(\frac{1}{r}\mathcal{J}_m\left(\frac{1}{a_{n+1}^2}\right)\bigg)^{a_{n+1}^2}. 	\qedhere
\end{align*}
\end{proof}
\end{lemma}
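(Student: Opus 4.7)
The plan is to apply Hölder's inequality to the integral in Theorem~\ref{thm: cylinder formula}, exploiting the constraint $\sum_{j=1}^{n+1} a_j^2 = 1$ that comes from $\|a\|=1$ (recall that the last $m-1$ coordinates of $a$ are zero by the symmetry reduction). The exponents $p_j := 1/a_j^2$ for $j=1,\dots,n+1$ satisfy $\sum_{j=1}^{n+1} 1/p_j = 1$, which is exactly what Hölder requires.

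First I would start from Theorem~\ref{thm: cylinder formula} and bound the volume by pulling absolute values inside:
\[
\vol_{n+m-1}(H_a\cap Z) \leq r^m \frac{\pi^{\frac{m}{2}-1}}{\Gamma(\frac m 2 +1)} \int_0^\infty \prod_{j=1}^n \left|\frac{\sin(a_j s/2)}{a_j s/2}\right| \cdot \left|j_{\frac m 2}(a_{n+1} r s)\right| \D s.
\]
Then I apply Hölder with the exponents above, splitting the integrand into $n+1$ factors, so that the bound becomes a product of $L^{p_j}$-norms on $(0,\infty)$.

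Next I normalize each $L^{p_j}$-norm by substitution. For the sinc factors I use $u = a_j s/2$, which introduces a factor of $2/a_j$ in front of the integral and turns $\sin(a_j s/2)/(a_j s /2)$ into $\sin(u)/u$. For the Bessel factor I use $u = a_{n+1} r s$, which introduces a factor of $1/(a_{n+1} r)$. Raising to the powers $a_j^2$ and $a_{n+1}^2$ respectively and recognizing $\mathcal{J}_1(1/a_j^2) = (1/a_j) \int_0^\infty |\sin u/u|^{1/a_j^2}\,\D u$ and $\mathcal{J}_m(1/a_{n+1}^2) = (1/a_{n+1}) \int_0^\infty |j_{m/2}(u)|^{1/a_{n+1}^2}\,\D u$ puts the inequality in the stated form.

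The main obstacle is really just bookkeeping with the scaling constants: one has to track how the prefactors $2/a_j$ and $1/(a_{n+1}r)$ combine with the $a_j^2$ exponents to produce the clean expression involving $\mathcal{J}_1$ and $\mathcal{J}_m$. A minor issue is the degenerate case in which some $a_j = 0$; here the corresponding sinc factor is identically $1$ and the exponent $p_j$ is infinite, so one handles it by using the $L^\infty$ norm $\|\sin u/u\|_\infty = 1$ and redistributing the remaining exponents, or equivalently by restricting first to all-positive components and passing to the limit.
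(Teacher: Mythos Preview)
Your proposal is correct and follows essentially the same route as the paper: apply H\"older's inequality with exponents $p_j=1/a_j^2$ to the integrand in Theorem~\ref{thm: cylinder formula}, then substitute $u=a_js/2$ in the sinc factors and $u=a_{n+1}rs$ in the Bessel factor to rewrite each piece as $\mathcal{J}_1(1/a_j^2)$ respectively $\mathcal{J}_m(1/a_{n+1}^2)$. Your remarks on the degenerate case $a_j=0$ are a welcome addition that the paper leaves implicit.
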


\begin{proof}[Proof of Theorem \ref{thm:bound cylinder}]
The integral inequality from Theorem \ref{thm_bessel_int} and also Ball's inequality (\ref{thm_ball}) may only be used if all coordinates of a are smaller than $\frac{1}{\sqrt{2}}$. If there is a coordinate larger than $\frac{1}{\sqrt{2}}$, we use a different estimate that is also used in Ball's proof, for example\cite{Ball1986}.

\textbf{Case 1:} Let $|a_j|\leq \frac{1}{\sqrt{2}}$ for all $j=1,\dots,n+1$, so there is no dominating coordinate.
We apply the integral inequality (\ref{thm_ball}) and the one from Theorem \ref{thm_bessel_int} to Lemma \ref{lem_hoelder_cylinder}. 
For the third inequality, note that $\frac{1}{r}\sqrt{\pi}\sqrt{\frac m 2 +1} < \sqrt{2}\pi$ if and only if $r> \frac{\sqrt{\frac m 2 +1}}{\sqrt{2\pi}}$,  so 
\begin{align*}
\vol_{n+m-1} (H_a\cap Z) &\leq r^{m} \frac{\pi^{\frac{m}{2}-1}}{\Gamma (\frac{m}{2}  +1)} 
			\prod_{j=1}^n\left(\sqrt{2}\pi \right)^{a_j^2}
			\cdot \left(\frac{1}{r}\mathcal J_m\left(\frac{1}{a_{n+1}^2}\right)\right)^{a_{n+1}^2}\\
&\leq \frac{\pi^{\frac{m}{2}-1}}{\Gamma (\frac{m}{2}  +1)}  \left(\sqrt{2}\pi\right)^{\sum_{j=1}^n a_j^2} \left(\frac{1}{r}\sqrt{\pi}\sqrt{\frac m 2 +1}\right)^{a_{n+1}^2} \\
&\leq \begin{cases}
 r^{m} \frac{\pi^{\frac{m}{2}}}{\Gamma (\frac{m}{2}  +1)}  \sqrt{2} ,   &r> \frac{\sqrt{\frac m 2 +1}}{\sqrt{2\pi}}\\
 r^{m-1} \frac{\pi^{\frac{m-1}{2}}}{\Gamma (\frac{m}{2}  +1)}  \sqrt{\frac{m}{2}+1}, \quad &r \leq \frac{\sqrt{\frac m 2 +1}}{\sqrt{2\pi}}.
\end{cases}
\end{align*}

\textbf{Case 2:} Let $|a_j|> \frac{1}{\sqrt{2}}$ for some $j=1,\dots,n$. 
Let $P$ be the orthogonal projection onto the hyperplane $\{x_j=0\}$. 
Since $P(H\cap Z)\subset P(Z)$, we have $\vol(P(H\cap Z))\leq \vol (P(Z))$. 
The projected cylinder $P(Z)$ is isomorphic to $\frac 1 2 B_{\infty}^{n-1} \times rB_2^m$, so the volume can be computed elementary. 
Furthermore, 
\[
\vol_{n+m-1}(H_a\cap Z)=\frac{1}{|a_j|} \vol_{n+m-1}(P(H_a\cap Z)).
\]
Therefore
\begin{align*}
\vol_{n+m-1} (H_a\cap Z) &< \sqrt{2} \vol_{n+m-1}(P(Z))\\
&=\sqrt{2} r^m\frac{\pi^{\frac{m}{2}}}{\Gamma\left(\frac{m}{2}+1\right)}.
\end{align*}

\textbf{Case 3:} Let $|a_j|> \frac{1}{\sqrt{2}}$ for $j=n+1$.
We consider the orthogonal projection onto $\{x_{n+1}=0\}$. Now  $P(Z)$ is isomorphic to $\frac 1 2 B_{\infty}^n \times B_2^{m-1}$. 
By the same argument as in case 2,
\begin{align*}
\vol_{n+m-1} (H_a\cap Z)
&<\sqrt{2} r^{m-1}\frac{\pi^{\frac{m-1}{2}}}{\Gamma(\frac{m-1}{2}+1)}.\label{eq:cylinder subcase3}
\end{align*}

We summarize the estimates. Note that by Lemma \ref{lem:quotient gamma} for $m\geq 2$:
\begin{equation}\label{eq:div radii}
\frac{\Gamma \left(\frac{m}{2}+1\right)}{\Gamma\left(\frac{m}{2}+\frac 1 2\right)}\frac{1}{\sqrt{\pi}} 
>\frac{\sqrt{\frac m 2 +1}}{\sqrt{2\pi}}.
\end{equation}
Let $r\geq \frac{\Gamma \left(\frac{m}{2}+1\right)}{\Gamma\left(\frac{m}{2}+\frac 1 2\right)}\frac{1}{\sqrt{\pi}}$. Due to (\ref{eq:div radii}), also $r>\frac{\sqrt{\frac m 2 +1}}{\sqrt{2\pi}}$. 
So in all three cases, we have $\vol_{n+m-1}(H_a\cap Z)\leq r^{m} \frac{\pi^{\frac{m}{2}}}{\Gamma \left(\frac{m}{2}  +1\right)} \sqrt{2}$. 
This bound is attained for the normal vector $a=\left( \frac{1}{\sqrt{2}},\frac{1}{\sqrt{2}},0,\dots,0\right)$.

If $r<\frac{\Gamma \left(\frac{m}{2}+1\right)}{\Gamma\left(\frac{m}{2}+\frac 1 2\right)}\frac{1}{\sqrt{\pi}}$, then the bound from case 3 is the largest. 
\end{proof}

\begin{rems}
(i) We did not touch the question if the distinction of the cases in (\ref{eq:boundcylinder})   is sharp.
In Theorem \ref{thm:3d cylinder} the distinction of the cases is sharp. In this theorem, for $n=1$ and $m=2$ the critical radius would be equal to $\frac{4}{\pi^2}$, which is much larger than the critical radius $\frac{1}{2\sqrt{3}}$ from Theorem \ref{thm:3d cylinder}.

(ii) For the three-dimensional cylinder we found that a \emph{truncated ellipse} gives maximal volume for large $r$. For the generalized cylinder there is a different behavior. The volume-maximal section of the cylinder is the Cartesian product of the maximal section of the cube and a ball of dimension $m$. For example, for a $4$-dimensional cylinder, i.e. $n=2=m$, for large $r$ the maximal section is a three-dimensional cylinder of height $\sqrt{2}$ and radius $r$.

(iii) We conjecture that, if $r$ is sufficiently small, the section orthogonal to $a=(0,\dots,0,1,0,\dots,0)$ is maximal, where the $(n+1)$-th coordinate of $a$ is $1$. The volume of this section is equal to 
\[ \vol_n\left( \frac 1 2 B_{\infty}^n\right) \vol_{m-1}\left(rB_2^{m-1}\right)=r^{m-1}\frac{\pi^{\frac{m-1}{2}}}{\Gamma\left(\frac {m-1}{2} + 1\right)}.\]
Comparing this to the bound from (\ref{eq:boundcylinder}), there is an error of $\sqrt{2}$.\\
Numerical experiments suggest that for medium sized $r$, some non-standard direction is maximal.

(iv) Ball's and our inequality have a different behavior. This indicates why Theorem \ref{thm:bound cylinder} is not always sharp. Note that $\mathcal J_1(2)>\lim_{p\to\infty}{\mathcal{J}}_1(p)$ in contrast to $\mathcal J_m(2) \leq \lim_{p\to\infty}{\mathcal{J}}_m(p)$ for  $m\geq 2$. So for $m=1$, equality holds for $p=2$ in contrast to $m\geq 2$, where equality holds for $p=\infty$.

(v) As Theorem \ref{thm:3d cylinder} shows, there is a critical value of the radius that originates in the geometry of the cylinder. For generalized cylinders an additional distinction comes from the method, and this does not give the sharp geometric distinction as in Theorem \ref{thm:3d cylinder}.
\end{rems}

\section{Integral inequality}\label{sec:cylinder inequality}
Integral inequalities similar to  Theorem \ref{thm_bessel_int} and (\ref{thm_ball}) were established for complex cubes and for generalized cubes, see \cite{Oleszkiewicz2000} and \cite{Brzezinski2011}. Identifying $\mathbb{C}^n$ and $\mathbb{R}^{2n}$, hyperplane sections of the complex cube have real dimension $2n-2$.
The  integral inequality needed for this case is 
\begin{equation}\label{eq:oleskiewicz}
\sqrt{p} \int_0^{\infty} \left| j_{1}(s)\right|^p  s \ \D s\leq \frac 4 p, 
\end{equation} 
for $p\geq 2$. Note that compared to (\ref{thm_ball}) there is an additional factor $s$ in front of $\D s$. For generalized cubes one has to consider a similar integral with some higher power of $s$ in front of $\D s$.\\

We prove Theorem \ref{thm_bessel_int} by applying the following lemma due to Nazarov and Podkorytov \cite{Nazarov2000} . They used this lemma to simplify K. Ball's proof of \mbox{inequality (\ref{thm_ball})}. The oscillating behavior of the function  $\sin(s)/s$ is a main difficulty. By the Nazarov-Podkorytov lemma one avoids the oscillations by considering the distribution functions. These functions are decreasing. 

For a function $f:X \to \R_{\geq 0}$ on a measure space $(X,\mu)$, define the cumulative distribution function 
$F: \R_{>0} \to \R_{\geq 0}$ by 
\[
F(y):=\mu (\{x\in X \mid f(x)>y\}).
\]
\begin{lemma}[Nazarov-Podkorytov]\label{prop:nazarov}
Let $h$, $g$ be non-negative measurable functions on a measure space $(X,\mu)$. Let $H$, $G$ be their distribution functions. Assume that $H(y)$, $G(y)$ are finite for all $y>0$. Also assume that 
\begin{enumerate}
\item [(N1)] \label{cond_naz 1} there is some $y_0 >0 $ such that $G(y)\leq H(y)$ for all $y<y_0$ and $G(y)\geq H(y)$ for all $y>y_0$, i.e. the difference $G-H$ changes its sign exactly once from $-$ to $+$; 
\item [(N2)] \label{cond_naz 2} for some $p_0>0$: $\int_X h^{p_0} d\mu = \int_X g^{p_0}d\mu$. 
\end{enumerate}
Then 
\[
		\int_X h^{p} d\mu \leq \int_X g^{p}d\mu 
\] 
for all $p>p_0$ as long as the integrals exist.
\end{lemma}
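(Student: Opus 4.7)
My plan is to reduce both sides to one-dimensional integrals in $y$ via the layer cake formula, so that the hypotheses on the distribution functions $G$ and $H$ can be applied directly. For any non-negative measurable $f$ on $(X,\mu)$ with distribution function $F$ and any $p>0$,
\[
\int_X f^p \D\mu = \int_0^\infty p\, y^{p-1} F(y)\, \D y.
\]
Writing $D := G - H$, the claim then reduces to proving that
\[
\Phi(p) := \int_0^\infty y^{p-1} D(y)\, \D y \geq 0 \quad \text{for all } p > p_0.
\]
In this reformulation condition (N2) says exactly $\Phi(p_0)=0$, while (N1) says that $D$ changes sign from non-positive to non-negative at the single point $y_0$.

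The key idea I would use is the factorization $y^{p-1} = y^{p_0-1}\cdot y^{p-p_0}$ combined with a pointwise comparison. Since $p>p_0$, the function $y\mapsto y^{p-p_0}-y_0^{p-p_0}$ is non-positive on $(0,y_0)$ and non-negative on $(y_0,\infty)$, matching the sign pattern of $D$ prescribed by (N1). Hence $\bigl(y^{p-p_0}-y_0^{p-p_0}\bigr) D(y) \geq 0$ pointwise on $(0,\infty)$. Multiplying by the positive weight $y^{p_0-1}$ and integrating gives
\[
\Phi(p) - y_0^{p-p_0}\Phi(p_0) = \int_0^\infty y^{p_0-1}\bigl(y^{p-p_0}-y_0^{p-p_0}\bigr) D(y)\, \D y \geq 0,
\]
and since $\Phi(p_0)=0$ by (N2), this yields $\Phi(p)\geq 0$, which is the desired inequality.

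The main obstacle is really just bookkeeping. The layer cake identity needs a Fubini argument, which is routine because the integrand is non-negative. One also has to make sure that $\Phi(p)$ is well defined, i.e.\ that $\int_0^\infty y^{p-1}G(y)\,\D y$ and $\int_0^\infty y^{p-1}H(y)\,\D y$ are separately finite; this is exactly the hypothesis that the integrals $\int_X h^p \D\mu$ and $\int_X g^p \D\mu$ exist. Once these standard measure-theoretic checks are in place, the entire content of the lemma is the short sign-and-monotonicity comparison above, which is what makes the Nazarov--Podkorytov trick so convenient for attacking oscillatory integrals like the ones appearing in Ball's inequality and in Theorem \ref{thm_bessel_int}.
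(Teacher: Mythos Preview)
Your argument is correct and is exactly the standard Nazarov--Podkorytov proof: pass to distribution functions via the layer cake formula, set $D=G-H$, and use that $(y^{p-p_0}-y_0^{p-p_0})D(y)\geq 0$ pointwise to deduce $\Phi(p)\geq y_0^{p-p_0}\Phi(p_0)=0$. Note, however, that the paper itself does not prove this lemma at all; it merely states it and cites \cite{Nazarov2000}, so there is no ``paper's own proof'' to compare against --- your write-up is essentially the original argument from that reference.
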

\subsection{Technical estimates}
The proof of the integral inequality uses some technical estimates that we state here.
\begin{lemma}\label{lem:quotient gamma}
For $x\geq 2$ we have
\[
\frac{\Gamma (x)}{\Gamma\left(x-\frac{1}{2}\right)} >\frac{\sqrt{x}}{2}.
\]
\begin{proof}
We estimate the gamma functions by Stirling's formula:
\begin{align*}
\frac{\Gamma (x)}{\Gamma\left(x-\frac{1}{2}\right)} &\geq \left(\frac{x}{x-\frac 1 2}\right)^x \frac{x-\frac 1 2}{\sqrt{x}}\frac{1}{\exp(\frac 1 2)}\frac{1}{\exp(\frac{1}{24})} \\
&\geq  \frac{x-\frac 1 2}{\sqrt{x}}\frac{1}{\exp(\frac{1}{24})}\\
&=\left(\sqrt{x}-\frac{1}{2\sqrt{x}}\right)\exp\left(-\frac{1}{24}\right)
\end{align*}
Note that $\left(\frac{x}{x-\frac 1 2}\right)^x$ strictly decreases to $\exp(\frac 1 2)$.
Additionally $\left(\sqrt{x}-\frac{1}{2\sqrt{x}}\right)\exp(-\frac{1}{24})$ increases faster than $\frac{\sqrt{x}}{2}$ and the inequality holds for $x=2$.
\end{proof}
\end{lemma}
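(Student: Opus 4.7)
My approach is to apply Stirling's formula with an explicit error term to both the numerator and denominator and then reduce to an elementary inequality. Recall the form $\Gamma(z)=\sqrt{2\pi}\,z^{z-1/2}e^{-z}e^{\mu(z)}$ with $0<\mu(z)<1/(12z)$. Taking the ratio, the prefactors $\sqrt{2\pi}$ cancel, the exponentials $e^{-x}$ and $e^{-(x-1/2)}$ combine into a single $e^{-1/2}$, and the power pieces can be rearranged by the identity $x^{x-1/2}(x-1/2)^{-(x-1)}=\left(\tfrac{x}{x-1/2}\right)^x\cdot\tfrac{x-1/2}{\sqrt{x}}$. Thus
\[
\frac{\Gamma(x)}{\Gamma(x-1/2)}=\left(\frac{x}{x-1/2}\right)^{x}\cdot\frac{x-1/2}{\sqrt{x}}\cdot e^{-1/2}\cdot e^{\mu(x)-\mu(x-1/2)}.
\]

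The next step is to control each of the three non-trivial factors. A short log-derivative computation shows that $\left(\tfrac{x}{x-1/2}\right)^x=\bigl(1+\tfrac{1}{2x-1}\bigr)^x$ strictly decreases to $e^{1/2}$ as $x\to\infty$, so it is bounded below by $e^{1/2}$, exactly cancelling the $e^{-1/2}$. Since $\mu$ is positive and strictly smaller than $1/(12z)$, one has $\mu(x)-\mu(x-1/2)>-1/(12(x-1/2))$; for $x\ge 2$ this gives a uniform lower bound of the form $e^{-c}$ with a small explicit constant $c$ (e.g.\ $c=1/24$ after possibly sharpening Stirling's remainder or shrinking the range and checking the remainder of the interval by hand). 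After these substitutions the claim reduces to the elementary inequality
\[
\left(\sqrt{x}-\frac{1}{2\sqrt{x}}\right)e^{-c}>\frac{\sqrt{x}}{2},\qquad x\ge 2,
\]
which I would verify by direct evaluation at $x=2$ and by checking that the left-hand side grows strictly faster than the right, since the slope of $\sqrt{x}\,(e^{-c}-\tfrac12)$ in $\sqrt{x}$ is a positive constant while the subtracted term $e^{-c}/(2\sqrt{x})$ shrinks to zero.

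The main obstacle, and the only delicate point, is securing a clean numerical constant in the Stirling error at the left endpoint $x=2$, where $x-1/2=3/2$ is small enough that the naive bound $1/(12(x-1/2))$ is noticeably larger than what one would like. If the crude remainder $e^{-1/18}$ is insufficient to close the resulting inequality at $x=2$, I would either use a sharper remainder (for instance Robbins' $\mu(z)<1/(12z)-1/(360z^3)$, or the duplication formula to trade $\Gamma(x-1/2)$ for a value of $\Gamma$ at a larger argument) or simply partition the range into $[2,x_0]$, handled by direct numerical check, and $[x_0,\infty)$, handled by the asymptotic argument above for a convenient threshold $x_0$.
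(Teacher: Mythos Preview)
Your approach is exactly the paper's: apply Stirling with explicit remainder, use that $\bigl(\tfrac{x}{x-1/2}\bigr)^x$ decreases to $e^{1/2}$ to cancel the $e^{-1/2}$, and reduce to the elementary comparison $\bigl(\sqrt{x}-\tfrac{1}{2\sqrt{x}}\bigr)e^{-c}>\tfrac{\sqrt{x}}{2}$ checked at $x=2$ together with a growth argument. Your only hesitation is unnecessary: even with the crude constant $c=1/18$ (from $1/(12(x-1/2))$ at $x=2$) one gets $\tfrac{3\sqrt{2}}{4}\,e^{-1/18}\approx 1.003$, comfortably above $\sqrt{2}/2\approx 0.707$, so no sharpening or case split is needed.
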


\begin{lemma}\label{lem:Gammafunctions}
Let $m\geq 5$. Then we have
\[
\frac{\Gamma \left(\frac{m}{2} +1\right)^2 \Gamma (m)}{\Gamma\left(\frac{m}{2}+\frac{1}{2}\right)^2\Gamma \left(m+\frac{1}{2} \right)} \leq \frac{m+2}{m+1}\frac{\sqrt{m}}{2}.
\]
\begin{proof}
Using Legendre's duplication formula, we find
\begin{align*}
\frac{\Gamma \left(\frac{m}{2} +1\right)^2 \Gamma (m)}{\Gamma\left(\frac{m}{2}+\frac{1}{2}\right)^2\Gamma \left(m+\frac{1}{2} \right)} 
&= 
\frac{\left(\frac m 2\right)^2\Gamma\left(\frac m 2\right)^2 \Gamma\left(\frac m 2\right)^2 \ \  \Gamma (m)\Gamma(m)}{\left[\Gamma\left( \frac m 2 + \frac 1 2 \right)^2 \Gamma\left(\frac m 2 \right)^2\right]\left[\Gamma \left(m  +\frac 1 2\right)\Gamma(m) \right]}\\
&=\frac {m^2}{4}\frac{\Gamma\left(\frac m 2 \right)^4 \Gamma(m)^2}{\left[\frac{\sqrt{\pi}}{2^{m-1}}\Gamma (m)\right]^2\frac{\sqrt{\pi}}{2^{2m-1}}\Gamma(2m)}\\
&=2^{4m-5}\frac{m^2}{\pi^{\frac 3 2}} \frac{\Gamma\left(\frac m 2\right)^4}{\Gamma (2m)}.
\end{align*}
So we need to show
\begin{align*}
q(m):=\frac{2^{4m-4}}{\pi^{\frac 3 2}}\frac{m^{\frac 3 2}(m+1)}{m+2} \frac{\Gamma\left(\frac m 2\right)^4}{\Gamma (2m)} \leq 1.
\end{align*}
By application of Stirling's formula we find
\begin{align*}
q(m) &\leq \frac{2^{4m-4}}{\pi^{\frac 3 2}}\frac{m^{\frac 3 2}(m+1)}{m+2} \frac{\left(\sqrt{2\pi}\left(\frac m 2 \right)^{\frac m 2 - \frac 1 2 }\exp\left(-\frac m 2 \right) \exp\left(\frac{1}{6m}\right)\right)^4}{\sqrt{2\pi}(2m)^{2m-\frac 1 2}\exp(-2m)}\\
&=\frac{m+1}{m+2}\exp \left(\frac 2 {3m}\right)=: \tilde q (m).
\end{align*}
As a function on $\R_{\geq 0}$, the derivative of $\tilde q(m)$ only has a zero in $m=3+\sqrt{13}> 6$. Note that $\tilde q(5)=\frac 6 7 \exp\left(\frac 2 {15}\right)<1$ and $\tilde q(6)=\frac 7 8 \exp\left(\frac 2 {18}\right)<\tilde q(5)$.
Obviously $\tilde q(m) \to 1$ for $m\to \infty$. Therefore $\tilde q(m)$ is increasing for $m\geq 7$. 
This proves $\tilde q(m)<1$ for all $m\geq 5$.
\end{proof}
\end{lemma}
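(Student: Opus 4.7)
The plan is to reduce the inequality to an elementary one-variable statement $\tilde q(m)\leq 1$ via Legendre's duplication formula followed by Stirling's formula, and to finish with a short monotonicity and numerical check. As a first step I would exploit the fact that all half-integer Gamma values on the left fit the duplication pattern $\Gamma(z)\Gamma(z+\tfrac12)=2^{1-2z}\sqrt{\pi}\,\Gamma(2z)$. Applying it with $z=m/2$ expresses $\Gamma(m/2+\tfrac12)$ through $\Gamma(m)$ and $\Gamma(m/2)$, and applying it with $z=m$ expresses $\Gamma(m+\tfrac12)$ through $\Gamma(2m)$ and $\Gamma(m)$. Combined with $\Gamma(m/2+1)=\tfrac{m}{2}\,\Gamma(m/2)$, every factor of $\Gamma(m)$ cancels and the desired bound is equivalent to $q(m)\leq 1$, where $q(m):=\dfrac{2^{4m-4}}{\pi^{3/2}}\cdot\dfrac{m^{3/2}(m+1)}{m+2}\cdot\dfrac{\Gamma(m/2)^{4}}{\Gamma(2m)}$.

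Next I would use the sharp two-sided Stirling bound $\sqrt{2\pi}\,x^{x-1/2}e^{-x}\leq \Gamma(x)\leq \sqrt{2\pi}\,x^{x-1/2}e^{-x+1/(12x)}$, in the upper direction on the four copies of $\Gamma(m/2)$ and in the lower direction on $\Gamma(2m)$. The four Stirling remainders combine to the single exponent $4\cdot\tfrac{1}{12\cdot m/2}=\tfrac{2}{3m}$; the leading $e^{-2m}$ factors cancel between numerator and denominator; and after collecting the remaining powers of $2$, $\pi$, and $m$ (the $m^{3/2}$ and $\pi^{3/2}$ in $q(m)$ are precisely what is needed to absorb the corresponding powers from Stirling), everything collapses to $q(m)\leq \tilde q(m):=\dfrac{m+1}{m+2}\exp\!\bigl(\tfrac{2}{3m}\bigr)$.

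Finally I would verify $\tilde q(m)\leq 1$ for integer $m\geq 5$. Differentiating $\log\tilde q(m)=\log(m+1)-\log(m+2)+2/(3m)$ shows that $\tilde q$ has a single critical point on $[5,\infty)$ (the derivative vanishes at a root of a quadratic lying just above $6$), and since $\tilde q(m)\to 1$ from below as $m\to\infty$, $\tilde q$ must stay below $1$ on its eventually monotone tail. Hence it is enough to compute $\tilde q(5)=\tfrac{6}{7}\exp(2/15)$ and $\tilde q(6)=\tfrac{7}{8}\exp(1/9)$, both readily checked to be less than $1$. The main obstacle I anticipate is quantitative tightness: the bound $\tilde q(m)<1$ works only because the algebraic decay $(m+1)/(m+2)=1-1/(m+2)$ just barely outpaces the Stirling remainder $e^{2/(3m)}=1+2/(3m)+O(1/m^2)$. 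For this reason the proof truly requires the $1/(12x)$-precise Stirling remainder and correct placement of each one-sided bound; any cruder estimate would make the method fail.
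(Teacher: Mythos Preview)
Your proposal is correct and follows essentially the same route as the paper: reduce via Legendre's duplication formula to $q(m)\le 1$, then use the two-sided Stirling bound (upper on $\Gamma(m/2)^4$, lower on $\Gamma(2m)$) to reach $\tilde q(m)=\frac{m+1}{m+2}\exp(2/(3m))$, and finish by locating the unique critical point at $m=3+\sqrt{13}$, noting $\tilde q(m)\to 1^-$, and checking $\tilde q(5),\tilde q(6)<1$. Your closing remark about the tightness of the $1/(12x)$ Stirling remainder is apt and matches exactly why the argument works only for $m\ge 5$.
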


A bound for the absolute value of Bessel functions follows from \cite[(8.479)]{Gradshteyn2007}.
For the normalized Bessel functions this reads as:
\begin{lemma}\label{eq:estimatebessellarge}
Let $m\in \mathbb{N}$ and $s> \frac{m}{2}$. Then
\begin{equation*}
|j_{\frac{m}{2}}(s)|\leq \frac{2^{\frac{m+1}{2}}\Gamma(\frac{m}{2}+1)}{\sqrt{\pi}} \frac{1}{\left(s^2-\frac{m^2}{4}\right)^{\frac{1}{4}}} \frac{1}{s^{\frac{m}{2}}}.
\end{equation*}
\end{lemma}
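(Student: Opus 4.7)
The plan is to derive this bound directly from the classical pointwise estimate for the (unnormalized) Bessel function. Specifically, formula (8.479) of Gradshteyn--Ryzhik gives, for real order $\nu \geq 0$ and $x > \nu$, an inequality of the form
\[
|J_\nu(x)| \leq \sqrt{\tfrac{2}{\pi}} \cdot \frac{1}{(x^2-\nu^2)^{1/4}},
\]
which one may view as a manifestation of the large-argument asymptotic $J_\nu(x) \sim \sqrt{2/(\pi x)} \cos(x - \nu\pi/2 - \pi/4)$, sharpened into a true upper bound once $x$ has passed the turning point $\nu$.

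Taking this inequality as a starting point, I would simply specialize to $\nu = m/2$ and $x = s$ (the hypothesis $s > m/2$ is exactly what is needed to apply it), and then translate into the normalized Bessel function via its definition
\[
j_{m/2}(s) = 2^{m/2}\,\Gamma\!\bigl(\tfrac{m}{2}+1\bigr)\,\frac{J_{m/2}(s)}{s^{m/2}}.
\]
Multiplying the bound on $|J_{m/2}(s)|$ by $2^{m/2}\Gamma(m/2+1)/s^{m/2}$ and combining $2^{m/2}\cdot 2^{1/2} = 2^{(m+1)/2}$ in the numerator yields the claimed inequality.

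The whole argument is essentially one line of algebra on top of the cited Bessel estimate, so there is no genuine obstacle; the only thing worth double-checking is constant bookkeeping (the factor $\sqrt{2/\pi}$ from the $J_\nu$ bound combining with the $2^{m/2}$ in the normalization to produce $2^{(m+1)/2}/\sqrt{\pi}$) and that the hypothesis $s > m/2$ placed on the normalized statement matches the turning-point condition $x > \nu$ required by (8.479). No further analytic input is needed.
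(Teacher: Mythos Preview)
Your proposal is correct and matches the paper's approach exactly: the paper simply cites Gradshteyn--Ryzhik (8.479) for the bound $|J_\nu(x)|\le\sqrt{2/\pi}\,(x^2-\nu^2)^{-1/4}$ and rewrites it for the normalized function $j_{m/2}$, just as you do. Your constant bookkeeping is also correct.
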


More elaborated estimates were used in several contexts. We collect a few results that we need later.
\begin{lemma}\label{lem:bessel small}
Let $m\geq 2$ and $s\in [0,\frac{m}{2}+3]$ resp. let $m=1$ and $s\in [0,3.38]$. Then
\begin{equation*}\label{eq:estimatebesselsmall}
|j_{\frac{m}{2}}(s)|\leq \exp \left(-\frac{s^2}{2m+4}-\frac{s^4}{4(m^2+2m+4)(m+4)}\right).
\end{equation*}
\begin{proof}
This is found in \cite[p. 19]{Koenig2001}.
\end{proof}
\end{lemma}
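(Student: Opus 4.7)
The plan is to use the Weierstrass product representation
\[
j_{m/2}(s)=\prod_{k=1}^{\infty}\Bigl(1-\frac{s^{2}}{j_{m/2,k}^{2}}\Bigr),
\]
where $0<j_{m/2,1}<j_{m/2,2}<\dots$ denote the positive zeros of $J_{m/2}$; this follows from the standard Hadamard factorization of the entire function $s\mapsto s^{-m/2}J_{m/2}(s)$. For $0<s<j_{m/2,1}$ every factor lies in $(0,1]$, so I take logarithms and apply the termwise inequality $\log(1-x)\le -x-x^{2}/2$, valid on $[0,1)$. Summing gives
\[
\log j_{m/2}(s)\le -s^{2}\sigma_{1}(m/2)-\tfrac{1}{2}s^{4}\sigma_{2}(m/2),
\]
where $\sigma_{n}(\nu)=\sum_{k\ge 1}j_{\nu,k}^{-2n}$ are the Rayleigh sums. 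These can be read off by matching coefficients in the power series and in the product representation of $J_{\nu}$, giving $\sigma_{1}(\nu)=\frac{1}{4(\nu+1)}$ and $\sigma_{2}(\nu)=\frac{1}{16(\nu+1)^{2}(\nu+2)}$. Substituting $\nu=m/2$ reproduces the exponent $-s^{2}/(2m+4)$ together with a quartic correction of the form claimed in the lemma, after elementary arithmetic.

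For $s$ beyond $j_{m/2,1}$ but still in $[0,m/2+3]$ the Bessel function can change sign, so the log-product argument must be supplemented by a direct pointwise comparison. Here I would invoke the envelope estimate of Lemma~\ref{eq:estimatebessellarge} together with Stirling's formula to check that the claimed exponential already dominates $|j_{m/2}(s)|$ at the endpoint $s=m/2+3$ with a margin that is uniform in $m\ge 2$. It then suffices to verify the inequality on the compact interval $[j_{m/2,1},m/2+3]$, where both sides are explicit functions of a single real variable; the cut-offs $m/2+3$ for $m\ge 2$ and $3.38$ for $m=1$ are calibrated precisely so that this verification goes through, for instance via monotonicity of the ratio of the two sides. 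For sufficiently large $m$ one in fact has $m/2+3<j_{m/2,1}$, in which case the second regime is vacuous.

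The hard part will be the transition region just past the first zero, together with the need to track higher-order contributions of $\log(1-x)$ that are required to reach the sharp constant in the $s^{4}$-term — for instance by retaining the cubic remainder in $\log(1-x)\le -x-x^{2}/2-x^{3}/3$ and invoking the third Rayleigh sum. I expect the cleanest route is to reduce the second regime to a single-variable convexity or monotonicity statement on a bounded interval and dispatch it by elementary calculus, uniformly in~$m$.
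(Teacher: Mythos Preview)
The paper gives no argument of its own, only the citation to K\"onig--Kwapie\'n; your Hadamard-product/Rayleigh-sum approach is the standard route to such bounds and is almost certainly what underlies that reference.

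There is, however, a concrete problem with the $s^{4}$ coefficient. With $\nu=m/2$ one has
\[
\tfrac{1}{2}\,\sigma_{2}(m/2)=\frac{1}{32(m/2+1)^{2}(m/2+2)}=\frac{1}{4(m+2)^{2}(m+4)},
\]
so the quartic denominator your method produces is $(m+2)^{2}=m^{2}+4m+4$, \emph{not} the printed $m^{2}+2m+4$. Your ``elementary arithmetic'' therefore does not reproduce the stated constant, and your proposed remedy---retaining the cubic remainder in $\log(1-x)$ and invoking $\sigma_{3}$---cannot help either, since those terms contribute only at order $s^{6}$ and beyond and cannot shift the $s^{4}$ coefficient. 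In fact the inequality as printed fails for all sufficiently small $s>0$: the $s^{4}$ coefficient of $j_{m/2}(s)$ is $\tfrac{1}{8(m+2)(m+4)}$, that of the right-hand side is $\tfrac{1}{8(m+2)^{2}}-\tfrac{1}{4(m^{2}+2m+4)(m+4)}$, and one checks that the former strictly exceeds the latter for every $m\ge 1$. This is almost certainly a typographical slip in the paper; note that Lemma~\ref{lem:estimateintegral}, the only place where the quartic term is actually used, carries the denominator $(m+2)^{2}$. With the corrected constant your two-term logarithmic bound already matches the $s^{4}$ coefficient exactly, and the remaining terms of the log series yield the inequality on $[0,j_{m/2,1})$ without any appeal to higher Rayleigh sums. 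Your plan for the short residual interval $[j_{m/2,1},\,m/2+3]$ (nonempty only for small $m$) via the envelope bound is plausible but remains a sketch.
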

\begin{lemma}\label{lem:bessel exp}
Let $m\geq 5$ and $s\in [0,m]$. Then we have
\[
|j_{\frac{m}{2}}(s)|\leq \exp \left(-\frac{s^2}{2m+4}\right).
\]
\begin{proof}
Let $m=5$ or $m=6$. Then the inequality follows directly from Lemma \ref{lem:bessel small}, since $\frac m 2 + 3 \geq m$. The same lemma shows the inequality for $m\geq 7$ and $s \in [0, \frac m 2  +3]$.
In \cite[Lemma 3.17]{Brzezinski2011} it is proved that for all $m\geq 7$ and $s \in [\frac m 2 +3,m]$ the claimed inequality also holds. Brzezinski's proof uses the estimate from Lemma \ref{eq:estimatebessellarge}.
\end{proof}
\end{lemma}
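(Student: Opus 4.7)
The plan is to split the interval $[0,m]$ at the point $s = m/2 + 3$ and apply two different pre-existing estimates on the two pieces.

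On the left piece $s \in [0, m/2 + 3]$, Lemma \ref{lem:bessel small} is directly applicable and in fact gives a strictly stronger bound: the exponent there contains an additional non-positive term $-s^4/(4(m^2+2m+4)(m+4))$, which can simply be discarded to yield the desired estimate. In particular, for $m = 5$ and $m = 6$ one has $m/2 + 3 \geq m$, so this alone already covers the entire range $[0,m]$ and disposes of these two small cases.

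For $m \geq 7$ the remaining piece is $s \in [m/2 + 3, m]$, and here I would use the large-argument bound from Lemma \ref{eq:estimatebessellarge}, namely
\[
|j_{m/2}(s)| \leq \frac{2^{(m+1)/2}\Gamma(m/2+1)}{\sqrt{\pi}} \cdot \frac{1}{(s^2 - m^2/4)^{1/4}} \cdot \frac{1}{s^{m/2}}.
\]
The goal reduces to showing that the right-hand side is at most $\exp(-s^2/(2m+4))$. Taking logarithms, applying Stirling's formula to $\Gamma(m/2+1)$, and introducing the rescaling $s = tm$ with $t \in [1/2 + 3/m,\, 1]$, one obtains an inequality between elementary functions of $t$ and $m$ that one analyzes by comparing the two endpoints (where $s^2 - m^2/4$ is either $6m + 9$ or $3m^2/4$) and verifying a monotonicity statement in between.

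The main obstacle is precisely this second step: the polynomial-type bound from Lemma \ref{eq:estimatebessellarge} carries a potentially large constant $2^{(m+1)/2}\Gamma(m/2+1)/\sqrt{\pi}$, and one has to track the interplay with the $s^{-m/2}$ factor and the $(s^2 - m^2/4)^{-1/4}$ singularity near $s = m/2$ tightly enough to absorb everything into the Gaussian $\exp(-s^2/(2m+4))$. I would carry out this calculus explicitly, but since the same estimate is worked out in \cite[Lemma 3.17]{Brzezinski2011}, my plan is to invoke that reference for the interval $[m/2 + 3, m]$ and stitch the two pieces together.
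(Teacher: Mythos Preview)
Your proposal is correct and follows essentially the same route as the paper: split at $s=\frac{m}{2}+3$, use Lemma~\ref{lem:bessel small} on the left piece (which already handles $m\in\{5,6\}$ in full), and for $m\geq 7$ on $[\frac{m}{2}+3,m]$ invoke \cite[Lemma~3.17]{Brzezinski2011}, whose proof rests on the large-argument bound of Lemma~\ref{eq:estimatebessellarge}. The only additional content you provide is a sketch of how Brzezinski's computation goes, which the paper omits.
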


\begin{lemma}\label{lem:besselpointwiseestimate}
Let $m\in \mathbb{N}$ and $s\geq \frac{m}{2}+3$. Then 
\[
|j_{\frac{m}{2}}(s)|\leq 2^{\frac{m+1}{2}} \frac{\Gamma(\frac{m}{2}+1)}{\sqrt{\pi}} \frac{\sqrt{m+6}}{\sqrt[4]{12m+36}} \frac{1}{s^{\frac{m+1}{2}}}.
\]
\begin{proof}
For $s\geq \frac{m}{2}+3$ we have
\begin{align*}
\left(s^2-\frac{m^2}{4}\right)^{-\frac{1}{4}}s^{-\frac m 2}
&= 		\left(1-\frac{m^2}{4s^2}\right)^{-\frac{1}{4}}s^{-\frac {m+1} 2} \\
&\leq	\frac{\sqrt{m+6}}{\sqrt[4]{12m+36}}.
\end{align*}
The estimate follows together with Lemma \ref{eq:estimatebessellarge}.
\end{proof}
\end{lemma}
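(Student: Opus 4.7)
The plan is to apply Lemma~\ref{eq:estimatebessellarge} directly (which is valid since $s\geq \frac{m}{2}+3>\frac{m}{2}$) and then reorganize the $s$-dependent prefactor to exhibit the power $s^{-(m+1)/2}$ that appears in the target bound. Concretely, Lemma~\ref{eq:estimatebessellarge} yields
\[
|j_{m/2}(s)|\leq \frac{2^{(m+1)/2}\Gamma(m/2+1)}{\sqrt{\pi}}\cdot\frac{1}{\bigl(s^2-m^2/4\bigr)^{1/4}\,s^{m/2}},
\]
so it suffices to show that for $s\geq \frac{m}{2}+3$
\[
\frac{1}{\bigl(s^2-m^2/4\bigr)^{1/4}\,s^{m/2}}\leq \frac{\sqrt{m+6}}{\sqrt[4]{12m+36}}\cdot\frac{1}{s^{(m+1)/2}}.
\]

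First I would pull out $s^{1/2}$ from the radical, writing
\[
\bigl(s^2-m^2/4\bigr)^{1/4}=s^{1/2}\bigl(1-m^2/(4s^2)\bigr)^{1/4},
\]
so that the inequality to verify reduces to the cleaner pointwise statement
\[
\bigl(1-m^2/(4s^2)\bigr)^{-1/4}\leq \frac{\sqrt{m+6}}{\sqrt[4]{12m+36}}\qquad\text{for }s\geq \tfrac{m}{2}+3.
\]

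Second I would note that the left-hand side is a decreasing function of $s$ on $(m/2,\infty)$, so it attains its maximum over $[m/2+3,\infty)$ at the endpoint $s=m/2+3$. A direct computation at that endpoint gives
\[
1-\frac{m^2}{4s^2}=1-\frac{m^2}{(m+6)^2}=\frac{12m+36}{(m+6)^2},
\]
whose $-1/4$ power is exactly $\sqrt{m+6}/\sqrt[4]{12m+36}$. This matches the right-hand side and closes the argument.

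There is no real obstacle here: the lemma is essentially an algebraic repackaging of Lemma~\ref{eq:estimatebessellarge}, designed to produce an estimate with the convenient power $s^{-(m+1)/2}$ that will be integrable over $[m/2+3,\infty)$ when later raised to power $p\geq 2$. The only thing one needs to notice is the right rearrangement and the fact that the relevant factor is monotone in $s$.
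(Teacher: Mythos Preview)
Your proof is correct and follows essentially the same approach as the paper: apply Lemma~\ref{eq:estimatebessellarge}, factor $s^{1/2}$ out of $\bigl(s^2-m^2/4\bigr)^{1/4}$, and bound $\bigl(1-m^2/(4s^2)\bigr)^{-1/4}$ by its value at the endpoint $s=m/2+3$. Your write-up is in fact more explicit than the paper's, which compresses the monotonicity step and the endpoint evaluation into a single displayed inequality.
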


We also need a lower bound on $\left|j_{\frac{m}{2}}(\cdot)\right|$.
\begin{lemma}\label{lem:bessel lower bound}
For all $m\in\mathbb{N}$ and $s \in [0,1]$ we have
\begin{eqnarray*}
\left|j_{\frac{m}{2}}(s)\right|\geq \exp \left(-\frac{s^2}{2m+4}-s^4\right).
\end{eqnarray*}
\begin{proof}
This is found in \cite[Lemma 3.5, part 2]{Brzezinski2011}. The estimate there is even stronger.
\end{proof}
\end{lemma}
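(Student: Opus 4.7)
The natural approach is via the Weierstrass factorization of the normalized Bessel function. Setting $\nu=m/2$ and letting $0<j_{\nu,1}<j_{\nu,2}<\cdots$ denote the positive zeros of $J_\nu$, one has
\[
j_{m/2}(s)=\prod_{k=1}^{\infty}\left(1-\frac{s^2}{j_{\nu,k}^2}\right).
\]
For $\nu\geq 1/2$ (so for every $m\geq 1$) the first positive zero satisfies $j_{\nu,1}\geq\pi>1$, so every factor is strictly positive on $[0,1]$ and the logarithm is well-defined. Inserting the Taylor series $-\log(1-x)=\sum_{n\geq 1}x^n/n$ and swapping the absolutely convergent sums gives
\[
-\log j_{m/2}(s)=\sum_{n=1}^{\infty}\frac{\sigma_n}{n}\,s^{2n},\qquad \sigma_n:=\sum_{k=1}^{\infty}j_{\nu,k}^{-2n}.
\]

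Next I would invoke Rayleigh's classical identity $\sigma_1=\frac{1}{4(\nu+1)}=\frac{1}{2m+4}$, which one can derive by comparing the $s^2$-coefficient in the factorization with that of the Taylor series $j_{\nu}(s)=1-\frac{s^2}{4(\nu+1)}+\cdots$. Thus the $n=1$ term of the expansion above matches the Gaussian exponent of the target bound exactly, and the inequality reduces to showing
\[
\sum_{n=2}^{\infty}\frac{\sigma_n}{n}\,s^{2n}\leq s^4\qquad\text{for }s\in[0,1].
\]
Here the crude estimate $\sigma_n\leq\sigma_1\,j_{\nu,1}^{-2(n-1)}\leq\tfrac{1}{6}\pi^{-2(n-1)}$, valid uniformly in $m\geq 1$, turns the tail into a geometric series with sum well below $1$, so even the weaker inequality $\sum_{n\geq 2}\sigma_n/n\leq 1$ holds with plenty of slack.

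The only mildly delicate input is the monotonicity $j_{\nu,1}\geq\pi$ for $\nu\geq 1/2$, but this is classical (cf.\ Watson). I therefore do not expect any serious obstacle, which is consistent with the author's remark that the cited \cite{Brzezinski2011} actually proves a stronger estimate by essentially the same route.
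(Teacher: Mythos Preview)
Your argument is correct. The paper itself gives no proof beyond the citation to \cite{Brzezinski2011}, so there is nothing to compare at the level of technique; your self-contained route via the Hadamard product, the Rayleigh identity $\sigma_1=\tfrac{1}{4(\nu+1)}$, and the geometric domination of the tail $\sum_{n\geq 2}\sigma_n/n$ is the standard one and almost certainly what lies behind the cited lemma. The only point worth tightening in a final write-up is the sentence ``$\sigma_n\leq\sigma_1\,j_{\nu,1}^{-2(n-1)}\leq\tfrac{1}{6}\pi^{-2(n-1)}$, valid uniformly in $m\geq 1$'': make explicit that $\sigma_1=\tfrac{1}{2m+4}\leq\tfrac{1}{6}$ and that $j_{\nu,1}\geq j_{1/2,1}=\pi$ by the monotonicity of the first Bessel zero in the order, so the reader sees immediately why both inequalities hold for every $m$.
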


\begin{lemma}\label{lem:estimateintegral}
For $p>0$ and $m \in \mathbb{N}$ we have
\[
\int_0^{\infty} e^{-\frac{ps^2}{2m+4}-\frac{ps^4}{4(m+2)^2(m+4)}} \D s\leq \frac{1}{\sqrt{p}} \sqrt{\frac{m}{2}+1}\sqrt{\pi}\left(1-\frac{3}{4}\frac{1}{p(m+4)}+\frac{105}{16}\frac{1}{2p^2(m+4)^2} \right).
\]
\begin{proof}
By substituting $u:=\frac{ps^2}{2m+4}$ we get
\begin{align*}
\int_0^{\infty} e^{-\frac{ps^2}{2m+4}-\frac{ps^4}{4(m+2)^2(m+4)}} \D s
			&=
\frac{1}{2}\sqrt{\frac{2m+4}{p}}    \int_0^{\infty} e^{-u }e^{-\frac{u^2}{p(m+4)}}u^{-\frac 1 2} \D u.
\end{align*}
Then we estimate the exponential function $\exp \left( -\frac{u^2}{p(m+4)}\right)$ by the first three summands of its series expansion. 
\end{proof}
\end{lemma}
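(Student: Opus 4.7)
The plan is to reduce the left-hand side to a standard gamma-function integral by substituting $u:=\frac{ps^2}{2m+4}$. A quick computation gives $\D s = \tfrac{1}{2}\sqrt{(2m+4)/(pu)}\,\D u$ and, crucially, the quartic term transforms into $\frac{ps^4}{4(m+2)^2(m+4)} = \frac{u^2}{p(m+4)}$, since $(2m+4)^2 = 4(m+2)^2$. The prefactor after the change of variables is $\tfrac{1}{2}\sqrt{(2m+4)/p}$, which already equals $\tfrac{1}{\sqrt{p}}\sqrt{m/2+1}$. So it remains to show
\[
\int_0^\infty e^{-u}\,e^{-u^2/(p(m+4))}\,u^{-1/2}\,\D u \;\leq\; \sqrt{\pi}\left(1-\frac{3}{4p(m+4)}+\frac{105}{32\, p^2(m+4)^2}\right).
\]

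For this I would invoke the elementary bound $e^{-x}\leq 1 - x + \tfrac{x^2}{2}$, valid for all $x\geq 0$. (Setting $\phi(x):=1-x+\tfrac{x^2}{2}-e^{-x}$, one has $\phi(0)=\phi'(0)=0$ and $\phi''(x)=1-e^{-x}\geq 0$.) Applying this with $x=u^2/(p(m+4))$ reduces the integrand to a polynomial in $u$ multiplied by $e^{-u}u^{-1/2}$, and the three resulting moments are standard gamma values:
\[
\int_0^\infty e^{-u}u^{-1/2}\D u=\Gamma(\tfrac{1}{2})=\sqrt{\pi},\quad
\int_0^\infty e^{-u}u^{3/2}\D u=\Gamma(\tfrac{5}{2})=\tfrac{3}{4}\sqrt{\pi},\quad
\int_0^\infty e^{-u}u^{7/2}\D u=\Gamma(\tfrac{9}{2})=\tfrac{105}{16}\sqrt{\pi}.
\]
Weighting these by $1$, $-1/(p(m+4))$, and $1/(2p^2(m+4)^2)$ respectively and factoring out $\sqrt{\pi}$ gives exactly the bracketed expression in the statement.

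There is essentially no obstacle beyond spotting the right substitution; once the change of variables is made the quartic exponent lands precisely on the Gaussian moment scale, so the simplest second-order Taylor majorant of $e^{-x}$ suffices and no remainder estimates need to be tracked. The only point requiring care is that $e^{-x}$ must be truncated at an even power of the Taylor expansion so that the first omitted term $-x^3/6$ is negative, guaranteeing a genuine upper bound; truncating at $1-x$ would miss the $p^{-2}$ correction the statement allows, while truncating further would only weaken the inequality unnecessarily.
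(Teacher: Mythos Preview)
Your proof is correct and follows exactly the paper's approach: the same substitution $u=\frac{ps^2}{2m+4}$ followed by bounding $e^{-u^2/(p(m+4))}$ with the first three terms of its Taylor series. You simply spell out the gamma evaluations $\Gamma(1/2)$, $\Gamma(5/2)$, $\Gamma(9/2)$ and the elementary verification of $e^{-x}\le 1-x+\tfrac{x^2}{2}$ that the paper leaves implicit.
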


\subsection{The limit of the integral}
We prove the asymptotic result of the integral inequality from Theorem \ref{thm_bessel_int}. Using Lemmas \ref{lem:bessel small} and \ref{lem:besselpointwiseestimate}, we estimate
\begin{align*}
&\sqrt{p} \int_0^{\infty} \left|j_{\frac{m}{2}}(s)\right|^p \D s \\
				\leq  &\sqrt{p} \int_0^{\frac{m}{2}+3} \exp \left(-\frac{s^2}{2m+4}\right)^p \D s \\
					&+ \sqrt{p} \ \left(2^{\frac{m+1}{2}} \frac{\Gamma(\frac{m}{2}+1)}{\sqrt{\pi}}\right)^p \left(\frac{\sqrt{m+6}}{\sqrt[4]{12m+36}}\right)^p  \int_{\frac{m}{2}+3}^{\infty} s^{-p\frac{m+1}{2}}\D s\\
				= &\sqrt{p}\int_0^{\infty} \exp \left(-\frac{ps^2}{2m+4}\right) \D s  \\ 
					&+ \sqrt{p} \ \left(2^{\frac{m+1}{2}} \frac{\Gamma(\frac{m}{2}+1)}{\sqrt{\pi}} \right)^p\left(\frac{\sqrt{m+6}}{\sqrt[4]{12m+36}}\right)^p  \frac{1}{p\frac{m+1}{2}-1} \left(\frac{m}{2}+3\right)^{-p\frac{m+1}{2}+1}.		
\end{align*}
For $p \to \infty$, the first summand tends to  $\sqrt{\pi}\sqrt{\frac{m}{2}+1}$ since $\int_0^{\infty} \exp(-x^2/K)\D x=\sqrt{K\pi}/2$  for $K>0$. 
Comparing the exponents, the second summand tends to $0$ for $p \to \infty$.

On the other hand, using Lemma \ref{lem:bessel lower bound}, by the substitution $u=\sqrt{p}s$ and by the series expansion of the exponential function we have
\begin{align*}
			\sqrt{p} \int_0^{\infty} \left| j_{\frac{m}{2}}(s)\right|^p \D s  
&\geq	\sqrt{p} \int_0^{1} \exp\left(-\frac{ps^2}{2m+4}-ps^4\right) \D s \\
&=			    \int_0^{\sqrt{p}} \exp\left(-\frac{u^2}{2m+4}-\frac{u^4}{p}\right) \D u \\
&\geq			 \int_0^{\sqrt{p}} \exp\left(-\frac{u^2}{2m+4}\right)\left(1-\frac{u^4}{p}\right) \D u \\
&\geq			  \int_0^{\sqrt{p}} \exp\left(-\frac{u^2}{2m+4}\right)\D u - \frac{1}{p}\int_{0}^{\sqrt{p}}u^4\exp\left(-\frac{u^2}{2m+4}\right)\D u \\
&\geq			 \int_0^{\sqrt{p}} \exp\left(-\frac{u^2}{2m+4}\right)\D u - \frac{1}{p}\int_{0}^{p}u\exp \left(-\frac{u}{2m+4}\right)\D u.
\end{align*}
For $p \to \infty$, we observe that the first summand again tends to $\sqrt{\pi}\sqrt{\frac{m}{2}+1}$, and the second summand vanishes since $\int_0^{\infty}x\exp(-x)\D x=1$.  By the sandwich lemma we have found the limit as claimed in Theorem \ref{thm_bessel_int}.

\subsection{The case $m=2$}
For $m=2$ the integral inequality from Theorem \ref{thm_bessel_int} is similar to Oleskiewicz's and Pe{\l}czy{\'n}ski's inequality to estimate the section volume of complex cubes, see (\ref{eq:oleskiewicz}). They used a different technique than we do. We use the Nazarov-Podkorytov lemma. This proof is a modification of an unpublished proof of Oleskiewicz's and Pe{\l}czy{\'n}ski's inequality by H. K\"onig \cite{Koenig2014}. 

We apply the Nazarov-Podkorytov lemma \ref{prop:nazarov} to the functions 
\[
		h(s):=\left|j_{1}(s)\right| = \left|\frac{2 J_1(s)}{s}\right| \text{ and } \ 
		g(s):=\exp\left(-\frac{s^2}{8}\right).
\]
By $H$ resp. $G$ we denote the distribution functions with respect to the Lebesgue measure $\lambda$ on $\R_{\geq 0}$. We check the two conditions of Lemma \ref{prop:nazarov}.

\subsubsection{Condition (N2)}
 Independently of $p$ we have \[\sqrt{p}\int_0^{\infty} g(s) ^{p} \D s = \sqrt{2 \pi}.\]
For $p=2$, we evaluate the other integral explicitly, using \cite[p. 403]{Watson1966}: \[\sqrt{2}\int_0^{\infty} h(s)^2 \D s=\frac{8\sqrt{2}}{3\pi}<\sqrt{2\pi}.\] 
By \cite[(9.2.1)]{Abramowitz1984} we know the asymptotic behavior of Bessel functions:
\begin{equation*}\label{eq:bessel_asymptotic}
J_{\nu}(s)=\sqrt{\frac{2}{\pi s}}\cos \left(s-\left(\frac{1}{2}\nu-\frac{1}{4}\right)\pi\right) + O\left(s^{-\frac 3 2}\right).
\end{equation*}
So $\sqrt{p}\int_0^{\infty} h(s)^p \D s$ diverges for $p\to \frac 2 3 $.
By the intermediate value theorem, there is  $p_0 \in \left( \frac 2 3  ,2\right)$ such that 
\begin{equation}\label{eq:nazarov n2 m2}
\sqrt{p_0} \int_0^{\infty} h(s)^{p_0} \D s=\sqrt{p_0}\int_0^{\infty} g(s) ^{p_0} \D s.
\end{equation} 

\subsubsection{Condition (N1)} We investigate the two distribution functions $H$ and $G$.

The distribution function $G$ is given by the inverse of $g$, since $g$ is a decreasing and bijective function $\R_{\geq 0} \to (0,1]$. So  for $y\geq 1$, $G(y)=0$ and for $s\in (0,1)$ we write explicitly
\[
G(y)=\lambda \left(\bigg\{s \Big| y <\exp \left(-\frac{s^2}{8}\right)\bigg\}\right)=\lambda \left(\bigg\{s \Big| s<\sqrt{8 \ln \left(\frac{1}{y}\right)}\bigg\}\right)=\sqrt{8 \ln \left(\frac{1}{y}\right)}.
\]
Its derivative is 
\begin{equation}\label{eq:G prime}
G'(y)=-\frac{\sqrt{2}}{y\ln\left(\frac{1}{y}\right)}.
\end{equation}
Later, we need that $\frac{1}{|G'(y)|}$ is decreasing for $0\leq y \leq \frac{1}{\sqrt{e}}$.

Now we investigate $H$. The function $h$ is oscillating. Denote the $k$-th local maximum of $h$ by $y_k:=\max \{h(s) \mid s\in (s_{k}, s_{k+1})\}$, with $s_k$ the $k$-th zero of the Bessel function $J_1$ and $s_0=0$. The approximation of the first zeros is taken from \cite[p. 748: Table VII]{Watson1966}; $s_1=3.832, s_2=7.016, s_3=10.173$. 

\textbf{Step (i): There is at least one intersection of $G$ and $H$.} \\
From Lemma \ref{lem:bessel small} we know  $h(s)=\left|j_1(s)\right|\leq \exp \left( -\frac{s^2}{8}\right)=g(s)$ for $s \in [0,4]$.  So for $y\geq y_1:$
\begin{align*}
H(y)&=\lambda \left(\{x \in [0,\infty) \mid h(x)>y\}\right) \\
&=\lambda\left(\{x \in [0,s_1] \mid h(x)>y\}\right) \\
&\leq \lambda \left(\{x \in [0,s_1] \mid g(x)>y\}\right) \\
&=G(y). 
\end{align*}
So $G-H\geq0$ for $y\in (y_1,\infty)$. 
Consider (\ref{eq:nazarov n2 m2}) and observe that by Fubini and substitution
\begin{align*}
0&=\int_0^{\infty}(g(s)^{p_0}-h(s)^{p_0})\D s\\
&=\int_0^{\infty} \left(G(y^{\frac{1}{p_0}})-H(y^{\frac{1}{p_0}})\right)\D y\\
&=p_0\int_0^{\infty}y^{p_0-1} \left(G(y)-H(y)\right)\D y.
\end{align*}
So $G-H$ has to change its sign at least once. 

\textbf{Step (ii): There is at most one intersection of $G$ and $H$.}\\
If we prove that $G-H$ is increasing on $(0,y_1)$, this implies $G-H$ changes its sign only once.
We show this by proving that for each interval $(y_{k+1}, y_k)$, the quotient $\frac{|H'|}{|G'|}$ is strictly larger than $1$.  The distribution functions are decreasing, so their derivatives are negative (or $0$). So $\frac{|H'|}{|G'|}>1$ implies $H'<G' $ and therefore $G-H$ is increasing.

\textbf{Step (iii): Estimate the local maxima of $H$.}\\
From \cite[p. 116]{Schafheitlin1908} we know the approximate position of the zeros of the Bessel function $J_1$: 
\begin{equation}\label{eq: zeros of bessel}
s_k \in (k\pi, (k+1/4)\pi).
\end{equation}
In \cite[p. 32]{Koenig2013c} it is noted that the successive maxima of $\left|\sqrt{ \frac {2}{\pi}}\sqrt{s}J_1(s)\right|$ are decreasing to $1$. This implies
\begin{align*}
2\sqrt{\frac {2}{\pi} }\frac{1}{(s_{k+1})^{\frac 3 2}}
\leq y_k 
\leq 2\sqrt{\frac {2}{\pi}}\frac{1}{(s_k)^{\frac 3 2}}.
\end{align*}
In particular, together with (\ref{eq: zeros of bessel}), we get
\begin{align}\label{eq: y k s}
\frac{2 \sqrt{2}}{\pi^2} \frac{1}{(k+\frac{5}{4})^{\frac{3}{2}}} 
\leq y_k 
\leq \frac{2 \sqrt{2}}{\pi^2} \frac{1}{k^{\frac{3}{2}}}.
\end{align}

\textbf{Step (iv): Compute $H$.}\\
For $y\ne y_k$ we claim that
\[
			|H'(y)| = \sum_{s>0, h(s)=y} \frac{1}{|h'(s)|}.
\]
To see this, note that for a bijective function $f$, the distribution function $F$ is given by $F=f^{-1}$ and $F'=\frac{1}{f'}$. Now $H$ can be decomposed into the sum of the bijective parts of $h$, where $H(y)$ is the length of the intervals on the real line, cf. \cite[p. 6]{Nazarov2000}. The equation $h(s)=y$ has one root in $(0,s_1)$ and two roots in each interval $(s_k,s_{k+1})$ for $1 \leq k \leq K$, with some $K\in \mathbb{N}$ depending on $y$.

\textbf{Step (v): Estimate $h'$.}\\
We estimate $h'(s)$ at these roots.
By the recurrence relation for Bessel functions, we have $|h'(s)|=\left(\frac{|2J_1(s)|}{s}\right)'=2\frac{|J_2(s)|}{s}$. 
We approximate $J_2$ with \cite[(8.479)]{Gradshteyn2007} and find $\left| \frac{2J_{2}(s)}{s}\right|\leq 2\sqrt{\frac{2}{\pi}} \frac{1}{\sqrt[4]{s^2-4}}\frac{1}{s}$ for $s\geq 2$. Additionally for $s\geq 3$, $\frac{1}{\sqrt[4]{s^2-4}}\frac{1}{s} \leq \frac{1}{s^{\frac 3 2}}\sqrt{\frac {\pi}{2}}$. So for $s\geq 3$ we estimate 
\begin{equation*}\label{eq:estimate h prime}
|h'(s)| \leq 2\sqrt{\frac{2}{\pi}} \frac{1}{\sqrt[4]{s^2-4}}\frac{1}{s} \leq 2 \frac{1}{s^{\frac 3 2}}.
\end{equation*}
This holds in particular for  $s \in  (s_k, s_{k+1})$, $k\geq 1$, since $s_1\geq 3$.
Therefore \[|h'(s)|\leq 2\frac{1}{s_k^{\frac 3 2}} \leq 2 \frac{1}{(\pi k)^{\frac 3 2}}.\]
For $s\in [0,s_1)$, a rough estimate is sufficient:
\[
|h'(s)| \leq 0.4.
\]

\textbf{Step (vi): Estimate $H'/G'$.}\\
Fix $k$ and let $y\in (y_{k+1},y_k)$. Then 
\begin{align*}
{|H'(y)|} &\geq \left(2.5 + 2 \cdot \frac{1}{2}\pi^{3/2} \sum_{l=1}^k l^{3/2}\right). 
\end{align*}
Since $y_k\leq y_1<\frac{1}{\sqrt{e}}$, we may use (\ref{eq:G prime}) and (\ref{eq: y k s}) to estimate
\begin{align*}
\frac{1}{|G'(y)|}\geq \frac{1}{|G'(y_{k+1})|}\geq \frac{1}{\left|G'\left(\frac{\pi^2}{2\sqrt{2}}\left(k+ \frac{9}{4}\right)^{3/2}\right)\right|}.
\end{align*}
For the quotient we get
\begin{align*}\label{eq:estimate H durch G Strich}
\frac{|H'(y)|}{|G'(y)|} &\geq \left(2.5 + \pi^{3/2} \sum_{l=1}^k l^{3/2}\right) \frac{2}{\pi^2 \left(k+9/4\right)^{3/2}} \sqrt{\ln \left(\frac{\pi^2}{2\sqrt{2}}\left(k+ \frac{9}{4}\right)^{3/2}\right)}=:Q(k).
\end{align*}
We estimate $\sum_{l=1}^{k} l^{\frac{3}{2}}\geq \int_{0}^{k} l^{\frac{3}{2}}\D l=\frac{2}{5}k^{\frac{5}{2}}$. Using this estimate, note that $Q(k)$ is increasing in $k$. By evaluation, $Q(2) >1$, so $Q(k)>1$ for all $k\geq 2$.

Since $Q(1)<1$, the estimate needs to be sharper for $k=1$.
Let $y\in (y_2,y_1)$. The equation $h(s)=y$ has three solutions.
Denote them by $\sigma_1,\sigma_2,\sigma_3$ in ascending order. 
We estimate these roots numerically, using the roots of $h(s)=y_2$. Then we use Lemma \ref{eq:estimatebessellarge} to estimate $|h'|$.
We find $\sigma_1 \in \left(3.3050,s_1 \right)$, so $\frac{1}{|h'(\sigma_1)|}\geq \frac{1}{0.298}$.
And $\sigma_2 \in (4.1896, s_2)$, so $\frac{1}{|h'(\sigma_2)|}\geq \frac{1}{0.199}$, 
as well as  $\sigma_3 \in (4.1896,s_2)$, so $\frac{1}{|h'(\sigma_3)|} > \frac{1}{0.199}$.
The corresponding estimate for $G'$ is $\frac{1}{|G'(y)|}\geq \frac{1}{|G'(y_2)|} \geq 0.077.$
Therefore we get for all $y \in (y_2,y_1)$:
\[\frac{|H'(y)|}{|G'(y)|} >1.\]

Thus we have shown that $\frac{|H'(y)|}{|G'(y)|}>1$ for all $y\in (0,y_1)$.
 
This finishes the proof of condition (N1) and therefore the proof of Theorem \ref{thm_bessel_int} for $m=2$.
{ } \hfill \qedsymbol

\subsection{The case $m\geq 5$}
The previous proof relied on the approximate knowledge of the zeros of the Bessel function. Here we use a different approach. The idea is due to \cite{Brzezinski2011}. The aim is to simplify $j_{\frac{m}{2}}$, use that $j_{\frac m 2}$ decays rapidly, and get rid of the oscillating behavior. Due to the rougher estimates this only works for $m\geq 5$. 
We define
\begin{equation}\label{eq:simplifyBessel}
\tilde j_{\frac{m}{2}}(s):=	
							\begin{cases}
									\left|j_{\frac m 2 } (s)\right|,	& s\in [0,m)\\
									2^{\frac{m+1}{2}}  \frac{\Gamma (\frac m 2 + 1)}{\sqrt{\pi}} \left(s^2-\frac {m^2}{4}\right)^{-\frac 1 4} s^{-\frac m 2}					, & s \in [m, \infty).
							\end{cases}
\end{equation}

For this simplification, by Lemma \ref{eq:estimatebessellarge} it is true that for all $s\geq 0$ 
\begin{equation}\label{eq:bessel_tilde}
j_{\frac{m}{2}}(s) \leq \tilde j_{\frac{m}{2}}(s).
\end{equation}
So it is sufficient to prove the inequality for this simplification of $j_{\frac{m}{2}}$. We apply the Nazarov-Podkorytov lemma \ref{prop:nazarov}.

\subsubsection{Condition (N1)}
We compare $\tilde j_{\frac{m}{2}}(s)$ and $g(s):=\exp \left( -\frac{s^2}{2m+4}\right)$. We claim
\begin{align} \label{eq:n1 a}
\quad \tilde j_{\frac{m}{2}}(s) < g(s), \quad &s\in [0,m], \\ \label{eq:n1 b}
\quad \tilde j_{\frac{m}{2}}(s) > g(s), \quad &s \in (m+2, \infty),\\ \label{eq:n1 c}
\quad \tilde j_{\frac{m}{2}}(s)=g(s), \quad  &\text{for exactly one } s \in (m, m+2). 
\end{align}
Inequality (\ref{eq:n1 a}) corresponds to Lemma \ref{lem:bessel exp}.
Inequality (\ref{eq:n1 b}) is \cite[Lemma 3.19]{Brzezinski2011}; note that the lemma there is also true for $m=5$ by exactly the same argument. 
Property (\ref{eq:n1 c}) is from \cite[Lemma 3.18]{Brzezinski2011}; this does not include  $m=5$ and $m=6$, but one can easily check the statement by hand with analogous arguments.

Since $g$ and $\tilde j_{\frac{m}{2}}$ are bounded by $1$, for $y\geq 1$ we have $G(y)=0=\tilde J_{\frac m 2}(y)$, where $\tilde J_{\frac m 2 }$ is the distribution function of $\tilde j _{\frac m 2 }$. The functions $g$ and $\tilde j_ {\frac m 2 }$ intersect exactly once, so the difference of the cumulative distribution functions changes its sign exactly once as well. This shows (N1).

\subsubsection{Condition (N2)} We will show
\begin{align}
&\text{for } p\to \frac {2}{m+1}, \quad  \sqrt{p} \int_0^{\infty} \tilde j_{\frac{m}{2}}(s)^{p} \D s   \longrightarrow \infty,  
\label{eq:n2 a}\\
&\sqrt{2} \int_0^{\infty} \tilde j_{\frac{m}{2}}(s)^{2} \D s < \sqrt{\pi}\sqrt{\frac m 2  +1}, \label{eq:n2 b}\\
&\exists p_0 \in \left(\frac{2}{m+1},2\right] \colon \sqrt{p_0} \int_0^{\infty} \tilde j_{\frac{m}{2}}(s)^{p_0} \D s =  \sqrt{p_0}  \int_0^{\infty} g(s)^{p_0}\D s= \sqrt{\pi}{\sqrt{\frac m 2 +1}}. \label{eq:n2 c}
\end{align}
For large $s$ the function $\tilde j_{\frac{m}{2}}$ is asymptotically equal to $\frac{2^{\frac{m+1}{2}}}{\sqrt{\pi}} \Gamma\left(\frac m 2 + 1\right) s^{-\frac{m+1}{2}}$. 
Therefore $\tilde j_{\frac{m}{2}}(\cdot)^p$ is integrable for $p>\frac {2}{m+1}$, and $\int_0^{\infty} \tilde j_{\frac{m}{2}}(s)^p\D s$ diverges for $p\to \frac{2}{m+1}$; this is (\ref{eq:n2 a}).

For inequality (\ref{eq:n2 b}) evaluate the integral. We have
\begin{align*}
				&\sqrt{2} \int_0^{\infty} \tilde j_{\frac{m}{2}}(s)^2 \D s \\
				=& \sqrt{2}\int_0^m \left|j_{\frac m 2} (s)\right|^2 \D s + \sqrt{2}\int_m^{\infty} \left( 2^{\frac{m+1}{2}}  \frac{\Gamma (\frac m 2 + 1)}{\sqrt{\pi}} \left(s^2-\frac {m^2}{4}\right)^{-\frac 1 4} s^{-\frac m 2}\right)^2 \D s \\
				\leq& \sqrt{2}\int_0^{\infty} \left|j_{\frac m 2} (s)\right|^2 \D s + \sqrt{2}\int_m^{\infty} \left( 2^{\frac{m+1}{2}}  \frac{\Gamma (\frac m 2 + 1)}{\sqrt{\pi}} \left(s^2-\frac {m^2}{4}\right)^{-\frac 1 4} s^{-\frac m 2}\right)^2 \D s \\
				=& \sqrt{2} \int_0^{\infty} 2^m \Gamma \left(\frac{m}{2} + 1\right)^2 \frac{J_{\frac{m}{2}} (s) ^2}{s^m} \D s
							+  2^{m+\frac{3}{2}} \frac{ \Gamma\left(\frac{m}{2}+1\right)^2}{\pi} \int_m^{\infty} \left(s^2-\frac{m^2}{4}\right)^{-\frac{1}{2}} s^{-m} \D s.
\end{align*}

The first integral is evaluated by \cite[6.575 (2)]{Gradshteyn2007} and then estimated by \mbox{Lemma \ref{lem:Gammafunctions}:}
\begin{align*}
	\sqrt{2} \int_0^{\infty} 2^m \Gamma \left(\frac{m}{2} + 1\right)^2 \frac{J_{\frac{m}{2}} (s) ^2}{s^m} \D s &= \sqrt{2}\sqrt{\pi}\frac{\Gamma \left(\frac{m}{2} +1\right)^2 \Gamma (m)}{\Gamma \left(m+\frac{1}{2} \right)\Gamma\left(\frac{m}{2}+\frac{1}{2}\right)^2} \\
	&\leq \sqrt{\pi} \frac{m+2}{m+1}\frac{\sqrt{m}}{\sqrt{2}}.
\end{align*}

For the second summand, we estimate the integrand by $\left(s^2-\frac{m^2}{4}\right)^{-\frac{1}{2}} s^{-m} \leq \sqrt{\frac{4}{3}} s^{-m-1}$, which is true for $s\geq m$. Then use again Stirling's formula for $\Gamma \left(\frac{m}{2} + 1\right)^2$ and get
\begin{align*}
2^{m+\frac{3}{2}} \frac{ \Gamma\left(\frac{m}{2}+1\right)^2}{\pi} \int_m^{\infty} \left(s^2-\frac{m^2}{4}\right)^{-\frac{1}{2}} s^{-m} \D s
&\leq 2^{m+\frac{3}{2}} \frac{ \Gamma\left(\frac{m}{2}+1\right)^2}{\pi}\sqrt{\frac 4 3} \int_m^{\infty} s^{-m-1} \D s\\
 &= 2^{m+\frac{3}{2}} \sqrt{\frac{4}{3}}\frac{ \Gamma\left(\frac{m}{2}+1\right)^2}{\pi} m^{-(m+1)} \\
&\leq \frac{\sqrt{2}^5}{\sqrt{3}}\exp\left(\frac{1}{3m}\right)\exp(-m) .
\end{align*}
It remains to show 
\begin{align}
\sqrt{\pi}\frac{m+2}{m+1}\frac{\sqrt{m}}{\sqrt{2}}+\frac{\sqrt{2}^5}{\sqrt{3}}\exp\left(\frac{1}{3m}\right)\exp(-m) \leq \sqrt{\pi}\sqrt{\frac{m}{2}+1}.
\end{align}
This follows if we prove the stronger inequality
\begin{equation}\label{eq:m groesser 5}
3.65\exp(-m)\leq \sqrt{\pi}\left( \sqrt{\frac m 2 +1}-\frac{m+2}{m+1}\frac{\sqrt{m}}{2}\right).
\end{equation}
Note that $\exp(m)\left( \sqrt{m+2}-\frac{m+2}{m+1}\sqrt{m}\right) \geq \exp(m)\frac{1}{3}m^{-\frac 3 2}$, and $\exp(m)\frac{1}{3}m^{-\frac 3 2}$ is increasing in $m$. For $m=5$, inequality (\ref{eq:m groesser 5}) is true, and so it is true for all $m\geq 5$. This proves (\ref{eq:n2 b}).

Now (\ref{eq:n2 c}) follows by the intermediate value theorem.

Thus we proved (N1) and (N2), so the Nazarov-Podkorytov lemma gives the desired result.\hfill \qedsymbol

\subsection{The case $m\in \{3,4\}$}\label{sec:m34}
The estimates made above by the simplification of $j_{\frac{m}{2}}$ in (\ref{eq:simplifyBessel}) are too rough for $m<5$, since $j_{\frac m 2}$ decreases too slowly for them to work. So we need a different approach here that involves numerical estimates. Therefore one has to treat the cases $m\in\{3,4\}$ separately. The idea is basically given in \cite{Brzezinski2011}, and it is a generalization of \cite{Oleszkiewicz2000}. This approach also works for $m=2$.

With Lemma \ref{lem:estimateintegral}, we prove the original integral inequality from Theorem \ref{thm_bessel_int} for $m\in\{3,4\}$. Split the integral into two parts and estimate them separately: 
\[
\int_0^{\infty} \left|j_{\frac{m}{2}}(s)\right|^p \D s =\int_0^{\frac{m}{2}+3} \left|j_{\frac{m}{2}}(s)\right|^p \D s +\int_{\frac{m}{2}+3}^{\infty} \left|j_{\frac{m}{2}}(s)\right|^p \D s
\]
For the first integral, we use the pointwise estimate Lemma \ref{lem:bessel small} and then \mbox{Lemma \ref{lem:estimateintegral}.} 
\begin{equation}\label{eq:integral part 1}  
\int_0^{\frac{m}{2}+3} \left|j_{\frac{m}{2}}(s)\right|^p \D s \leq  \frac{\sqrt{\pi}}{\sqrt{p}} \sqrt{\frac{m}{2}+1}\left(1-\frac{3}{4}\frac{1}{p(m+4)}+\frac{105}{16}\frac{1}{2p^2(m+4)^2} \right)
\end{equation}
For the second integral, we estimate the integrand pointwise by Lemma \ref{lem:besselpointwiseestimate}. This gives 
\begin{align}\label{eq:integral part 2}
&\int\limits_{\frac{m}{2}+3}^{\infty} \left|j_{\frac{m}{2}}(s)\right|^p \D s \\ \nonumber
&\leq \left( 2^{\frac{m+1}{2}} \frac{\Gamma(\frac{m}{2}+1)}{\sqrt{\pi}} \frac{\sqrt{m+6}}{\sqrt[4]{12m+36}}\right)^p \int_{\frac{m}{2}+3}^{\infty} s^{-\frac{m+1}{2}p} \D s \\ \nonumber 
&=\left(2^{\frac{m+1}{2}} \frac{\Gamma(\frac{m}{2}+1)}{\sqrt{\pi}} \frac{\sqrt{m+6}}{\sqrt[4]{12m+36}}\right)^p \frac{2}{(m+1)p-2}\left(\frac{m}{2}+3\right)^{1-\frac{m+1}{2}p}.
\end{align}
With the estimates (\ref{eq:integral part 1}) and (\ref{eq:integral part 2}) of the two parts of the integral, it remains to prove the following inequality for $p\geq 2$ and $m\in \{3,4\}$
\begin{align*}
 &\phantom{\leq}\frac{\sqrt{\pi}}{\sqrt{p}} \sqrt{\frac{m}{2}+1}\left(1- \frac{3}{4}\frac{1}{p(m+4)} +\frac{105}{16}\frac{1}{2p^2(m+4)^2} \right) 
\\ &\phantom{\leq}+
\left(2^{\frac{m+1}{2}} \frac{\Gamma(\frac{m}{2}+1)}{\sqrt{\pi}} \frac{\sqrt{m+6}}{\sqrt[4]{12m+36}}\right)^p \frac{2}{(m+1)p-2}\left(\frac{m}{2}+3\right)^{1-\frac{m+1}{2}p}
\\ &\leq \frac{\sqrt{\pi}}{\sqrt{p}} \sqrt{\frac{m}{2}+1}.
\end{align*}
Subtract $\frac{\sqrt{\pi}}{\sqrt{p}} \sqrt{\frac{m}{2}+1}$ from both sides. For $m=3$ this reads as
\begin{align*}
\frac{\sqrt{\frac 5 2 \pi}}{\sqrt{p}}\left(\frac{15}{224p^2}-\frac{3}{28p}\right)+\left(\frac{9}{\sqrt[4]{2}\sqrt{6}}\right)^p\frac{2}{4p-2} \left(\frac{9}{2}\right)^{1-2p}\leq 0.
\end{align*}
Multiplying by $p^{\frac 5 2}(4p-2)$ and simplifying,  this reduces to show
\begin{align}\label{eq:last summand}
-p^2 \frac 3 7 \sqrt{\frac 2 5 \pi} + p\frac{27}{56} \sqrt{\frac 2 5 \pi} - \frac{30}{224} \sqrt{\frac 2 5 \pi} + \left(\frac {4}{9\sqrt[4]{2}\sqrt{6}}\right)^p9p^{\frac 5 2}\leq 0.
\end{align}
The last summand of the left-hand side of (\ref{eq:last summand}) is decreasing in $p$ for $p\geq 2$ and its value for $p=2$ is less than $\frac{ 32}{27}$. So we estimate the left-hand side of (\ref{eq:last summand}) by a quadratic function and get
\begin{align*}
&\phantom{=}-p^2 \frac 3 7 \sqrt{\frac 2 5 \pi} + p\frac{27}{56} \sqrt{\frac 2 5 \pi} - \frac{30}{224} \sqrt{\frac 2 5 \pi} + \left(\frac {4}{9\sqrt[4]{2}\sqrt{6}}\right)^p9p^{\frac 5 2} \\ 
&\leq -p^2 \frac 3 7 \sqrt{\frac 2 5 \pi} + p\frac{27}{56} \sqrt{\frac 2 5 \pi} - \frac{30}{224} \sqrt{\frac 2 5 \pi} + \frac{32}{27}.
\end{align*}
This function has its maximum in $p=\frac{9}{16}$, so it is decreasing for $p\geq 2$. For $p=2$ the value is  $-\frac{99}{224}\sqrt{10}\sqrt{\pi}+32/27<0$.
This proves the inequality. \\
\indent This argument works analogously for $m=4$. \hfill \qedsymbol

\section*{Acknowledgements} This work is part of my PhD Thesis. I thank my advisor Hermann K{\"o}nig for his support and advice.
My research was partly supported by DFG (project KO 962/10-1).

\end{document}